\documentclass[11pt,a4paper]{article}
\pdfoutput=1
\usepackage{amsmath}
\usepackage{amsfonts}
\usepackage{amssymb}
\usepackage{amsthm}
\usepackage{epsfig}
\usepackage{graphicx}
\usepackage[colorlinks]{hyperref}

\newcommand{\Ib}{\ensuremath{\mathbf{I}}}
\newcommand{\Jb}{\ensuremath{\mathbf{J}}}
\newcommand{\Ii}{\ensuremath{\mathbf{i}}}
\newcommand{\Ij}{\ensuremath{\mathbf{j}}}

\newcommand{\IE}{\ensuremath{\mathbb{E}}}
\newcommand{\IL}{\ensuremath{\mathbb{L}}}
\newcommand{\IV}{\ensuremath{\mathbb{V}}}
\newcommand{\IR}{\ensuremath{\mathbb{R}}}
\newcommand{\IN}{\ensuremath{\mathbb{N}}}
\newcommand{\pc}{\ensuremath{p_{c}(d)}}
\newcommand{\IP}{\ensuremath{\mathbb{P}}}

\newcommand{\DD}{\ensuremath{\mathcal{D}}}
\newcommand{\CC}{\ensuremath{\mathcal{C}}}
\newcommand{\JJ}{\ensuremath{\mathcal{J}}}

\newcommand{\FF}{\ensuremath{\mathcal{F}}}
\newcommand{\HH}{\ensuremath{\mathcal{H}}}

\newcommand{\QQ}{\ensuremath{\mathcal{Q}}}
\newcommand{\LL}{\ensuremath{\mathcal{L}}}
\newcommand{\BB}{\ensuremath{\mathcal{B}}}
\newcommand{\UU}{\ensuremath{\mathcal{U}}}

\newcommand{\Var}{\ensuremath{\IV\text{ar}}}
\newcommand{\Dfat}{\ensuremath{D_{\text{fat}}}}
\newcommand{\Dfatc}{\ensuremath{D_{\text{fat}}^{c}}}
\newcommand{\Dfatd}{\ensuremath{D_{\text{fat}}^{d}}}
\newtheorem{theorem}{Theorem}[section]
\newtheorem{lemma}[theorem]{Lemma}

\newtheorem{proposition}[theorem]{Proposition}

\theoremstyle{remark}
\newtheorem{remark}[theorem]{Remark}
\newtheorem{openproblem}[theorem]{Open problem}

\makeatletter
	\let\@fnsymbol\@alph
\makeatother

\begin{document}

\title{Fat fractal percolation and $k$-fractal percolation}
\author{Erik I.\ Broman\footnote{Matematiska Institutionen, Uppsala Universitet, Box 480, 751 06 Uppsala, Sweden; e-mail: \texttt{broman\,(at)\,math.uu.se}. Supported by the G\"oran Gustafsson Foundation for Research in Natural Sciences and Medicine.}
\and Tim van de Brug\footnote{Department of Mathematics, VU University Amsterdam, De Boelelaan 1081a, 1081\,HV Amsterdam, The Netherlands; e-mail: \texttt{\char`{t.vande.brug,f.camia\char`}\,(at)\,vu.nl}. Supported by NWO Vidi grant 639.032.916.}
\and Federico Camia\footnotemark[2]
\and Matthijs Joosten\footnote{Department of Mathematics, VU University Amsterdam, De Boelelaan 1081a, 1081\,HV Amsterdam, The Netherlands; e-mail: \texttt{matthijsjoosten\,(at)\,gmail.com}. Supported by NWO grant 613.000.601.}
\and Ronald Meester\footnote{Department of Mathematics, VU University Amsterdam, De Boelelaan 1081a, 1081\,HV Amsterdam, The Netherlands; e-mail: \texttt{r.w.j.meester\,(at)\,vu.nl}.}}
\date{\today}

\maketitle

\begin{abstract}
We consider two variations on the Mandelbrot fractal percolation model. In the {\em $k$-fractal percolation model}, the $d$-dimensional unit cube is divided in $N^d$ equal subcubes, $k$ of which are retained while the others are discarded. The procedure is then iterated inside the retained cubes at all smaller scales. We show that the (properly rescaled) percolation critical value of this model converges to the critical value of ordinary site percolation on a particular $d$-dimensional lattice as $N \to \infty$. This is analogous to the result of Falconer and Grimmett in \cite{FalGri} that the critical value for Mandelbrot fractal percolation converges to the critical value of site percolation on the same $d$-dimensional lattice.
 
In the {\em fat fractal percolation model}, subcubes are retained with probability $p_n$ at step $n$ of the construction, where $(p_n)_{n\geq 1}$ is a non-decreasing sequence with $\prod_{n=1}^\infty p_n > 0$. The Lebesgue measure of the limit set is positive a.s.\ given non-extinction. We prove that either the set of connected components larger than one point has Lebesgue measure zero a.s.\ or its complement in the limit set has Lebesgue measure zero a.s.

\medskip
\noindent
2010 {\em Mathematics Subject Classification.} Primary 60K35; secondary 28A80, 60D05, 82B43.

\medskip
\noindent
{\em Key words and phrases.} Fractal percolation, random fractals, crossing probability, critical value.
\end{abstract}

\section{Introduction}\label{sec introduction}

In \cite{Man} Mandelbrot introduced the following fractal percolation model. Let $N \geq 2, d\geq 2$ be integers and consider the unit cube $[0,1]^{d}$. Divide the unit cube into $N^{d}$ subcubes of side length $1/N$. Each subcube is retained with probability $p$ and discarded with probability $1-p$, independently of other subcubes. The closure of the union of the retained subcubes forms a random subset $D^{1}_{p}$ of $[0,1]^d$. Next, each retained subcube in $D^{1}_{p}$ is divided into $N^d$ cubes of side length $1/N^{2}$. Again, each smaller subcube is retained with probability $p$ and discarded with probability $1-p$, independently of other cubes. We obtain a new random set $D^{2}_{p} \subset D^1_{p}$. Iterating this procedure in every retained cube at every smaller scale yields an infinite decreasing sequence of random subsets $D^1_{p} \supset D_p^2 \supset D_p^3 \supset \cdots$ of $[0,1]^d$. We define the limit set $D_{p} := \bigcap_{n=1}^{\infty} D^{n}_{p}$. We will refer to this model as the Mandelbrot fractal percolation (MFP) model with parameter $p$.

It is easy to extend and generalize the classical Mandelbrot model in ways that preserve at least a certain amount of statistical self-similarity and generate random fractal sets. It is interesting to study such models to obtain a better understanding of general fractal percolation processes and explore possible new features that are not present in the MFP model. In this paper we are concerned with two natural extensions which have previously appeared in the literature, as we mention below. We will next introduce the models and state our main results.

\subsection[k-fractal percolation]{$k$-fractal percolation}
Let $N \geq 2$ be an integer and divide the unit cube $[0,1]^d$, $d \geq 2$, into $N^d$ subcubes of side length $1/N$. Fix an integer $0 < k  \leq N^d$ and retain $k$ subcubes in a uniform way, that is, all configurations where $k$ cubes are retained have equal probability, other configurations have probability 0. Let $D_k^{1}$ denote the random set which is obtained by taking the closure of the union of all retained cubes.
Iterating the described procedure in retained cubes and on all smaller scales yields a decreasing sequence of random sets $D_k^1 \supset D_k^2 \supset D_k^3 \supset \cdots$. We are mainly interested in the connectivity properties of the limiting set $D_{k} := \bigcap_{n=1}^\infty D^{n}_{k}$.
This model was called the \emph{micro-canonical fractal percolation process} by Lincoln Chayes in \cite{Cha} and both \emph{correlated fractal percolation} and \emph{$k$ out of $N^d$ fractal percolation} by Dekking and Don \cite{DekDon}. We will adopt the terms \emph{$k$-fractal percolation} and \emph{$k$-model}.

For $F\subset [0,1]^d$, we say that the unit cube is \emph{crossed by $F$} if there exists a connected component of $F$ which intersects both $\{0\} \times [0,1]^{d-1}$ and $\{1\} \times [0,1]^{d-1}$. Define $\theta(k,N,d)$ as the probability that $[0,1]^d$ is crossed by $D_k$. Similarly, $\sigma(p,N,d)$ denotes the probability that $[0,1]^d$ is crossed by $D_{p}$. Let us define the critical probability $p_c(N,d)$ for the MFP model and the critical threshold value $k_c(N,d)$ for the $k$-model by
\[
 p_c(N,d) := \inf\{ p: \sigma(p,N,d) >0\}, \quad k_{c}(N,d) := \min \{k: \theta(k,N,d)>0\}.
\]
Let ${\mathbb L}^d$ be the $d$-dimensional lattice with
vertex set ${\mathbb Z}^d$ and with edge set given by the adjacency
relation: $(x_1, \ldots, x_d)=x \sim y=(y_1, \ldots, y_d)$
if and only if $x \neq y$, $|x_i-y_i| \leq 1$ for all $i$
and $x_i=y_i$ for at least one value of $i$. Let $p_c(d)$ denote the critical probability for site percolation on 
$\IL^d$. It is known (see \cite{FalGri}) that $p_{c}(N,d) \to p_c(d)$ as $N \to \infty$. We have the following analogous result for the $k$-model.

\begin{theorem}\label{thm critical k}
For all $d \geq 2$, we have that
 \[
 \lim_{N \to \infty} \frac{k_{c}(N,d)}{N^{d}} = p_{c}(d). 
 \]
\end{theorem}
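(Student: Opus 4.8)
The plan is to derive Theorem~\ref{thm critical k} from the Falconer--Grimmett theorem $p_c(N,d)\to p_c(d)$ by sandwiching the $k$-model between two Mandelbrot fractal percolation models with nearby parameters. First I would record the monotonicity that turns $k_c(N,d)$ into a genuine threshold: $\theta(k,N,d)$ is non-decreasing in $k$. This follows because a uniformly chosen $k$-subset of the $N^d$ subcubes, enlarged by one further subcube chosen uniformly among those discarded, is again a uniformly chosen $(k+1)$-subset; applying this at every retained cube and every scale couples the limit set of the $k$-model inside that of the $(k+1)$-model. Hence $\theta(k,N,d)>0$ iff $k\ge k_c(N,d)$, and the analogous coupling for the MFP model gives $\sigma(p,N,d)>0$ iff $p\ge p_c(N,d)$. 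Since $p_c(N,d)\to p_c(d)$, it thus suffices to prove, for every fixed $q<p_c(d)$, that $\theta(\lfloor qN^d\rfloor,N,d)=0$ for all large $N$, and for every fixed $q>p_c(d)$, that $\theta(\lceil qN^d\rceil,N,d)>0$ for all large $N$; letting $q\uparrow p_c(d)$ and $q\downarrow p_c(d)$ then yields the theorem.

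Both statements rest on a scale-wise comparison with an MFP$(q')$ model at a parameter $q'$ strictly between $q$ and $p_c(d)$. Run an MFP$(q')$ construction and, at a given scale, let $S$ denote the set of retained subcubes, so that $|S|\sim\Bin(N^d,q')$. If $q'<k/N^d$, then $|S|\le k$ except on an event of probability exponentially small in $N^d$, and conditionally on $|S|=m\le k$ the set $S$ is a uniform $m$-subset, which can be enlarged to a uniform $k$-subset $T\supseteq S$; this realizes, at that scale, the inclusion of the MFP$(q')$ retained set in the $k$-model retained set. Dually, if $q'>k/N^d$, then $|S|\ge k$ with the same exponentially small exceptional probability, and a uniform $k$-subset of $S$ is in fact a uniform $k$-subset of all $N^d$ subcubes, giving the reverse inclusion. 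Iterated over all scales, this would couple $D_k\subseteq D_{q'}$ (for the lower bound, with $q<q'<p_c(d)$) or $D_{q'}\subseteq D_k$ (for the upper bound, with $p_c(d)<q'<q$) on the event that no scale is exceptional.

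The main obstacle is that this event has probability zero: at scale $n$ there are of order $N^{dn}$ subcubes, each exceptional with probability $e^{-cN^d}$, so a naive union bound over the infinitely many scales diverges, and one cannot simply read off $\theta(k,N,d)\le\sigma(q',N,d)$ or $\ge\sigma(q',N,d)$. Instead, as in \cite{FalGri}, the one-scale comparison must be fed into a recursive estimate for the crossing probabilities $a_n:=P(D^n_k\text{ crosses }[0,1]^d)$. Here one uses that $\theta(k,N,d)=\lim_{n\to\infty}a_n$ (a crossing of $D_k$ yields a crossing of every $D^n_k$, and conversely crossings of all the $D^n_k$ produce a crossing of $D_k=\bigcap_n D^n_k$ by a compactness argument), together with the self-similar structure: conditionally on the level-$1$ retention pattern, the events ``$D^n_k$ crosses subcube $C$ from within'' at the retained level-$1$ subcubes $C$ are independent and each governed by $a_{n-1}$. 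For the lower bound one shows that the resulting recursion drives $a_n\to 0$, exploiting that a uniform $k$-subset is negatively associated and, in the ``fits inside'' sense above, comparable to subcritical i.i.d.\ site percolation on $\IL^d$, whose box-crossing probabilities decay exponentially in $N$; this gives $\theta(\lfloor qN^d\rfloor,N,d)=0$. For the upper bound one instead builds, inside MFP$(q')$, a renormalized crossing that survives deletion of the exponentially rare exceptional subcubes at each scale --- exactly the robustness used to prove $\sigma(q',N,d)>0$ for supercritical $q'$ in \cite{FalGri} --- and concludes $\theta(\lceil qN^d\rceil,N,d)>0$. I expect this recursive estimate, rather than the couplings themselves, to be the technical heart; the only genuinely new ingredients are the scale-wise ``fits inside'' couplings and the exponential bounds on the exceptional events, which let the Falconer--Grimmett machinery run with $k$-out-of-$N^d$ sampling in place of independent sampling.
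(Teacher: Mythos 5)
Your two-sided strategy (show $\liminf k_c(N,d)/N^d \geq p_c(d)$ separately from $\limsup k_c(N,d)/N^d \leq p_c(d)$, and get each half by comparing the $k$-model to a nearby MFP model) is the right decomposition and matches the paper. You also correctly identify the central obstacle: the scale-wise coupling between the $k$-model and MFP$(q')$ fails on an exceptional set of probability $e^{-cN^d}$ at each scale, and there are infinitely many scales, so a global coupling $D_{k}\subseteq D_{q'}$ (or the reverse) has probability zero. Where your proposal departs from the paper is in how it deals with this obstacle, and I think the subcritical half as you describe it has a genuine gap.

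For the subcritical direction, you propose to feed the one-scale comparison into a recursion for the finite-level crossing probabilities $a_n = \IP(D^n_k \text{ crosses } [0,1]^d)$ and argue that it drives $a_n \to 0$, using negative association of a uniform $k$-subset plus subcritical i.i.d.\ decay. Two issues. First, the events ``$D^n_k$ crosses $[0,1]^d$'' do not satisfy a clean one-step recursion: a crossing of the unit cube does not factor into crossings of the individual level-$1$ cubes (it passes through a chain of level-$1$ cubes, traversing each one in an uncontrolled way), so ``each governed by $a_{n-1}$'' is not available and the claimed recursion does not close. Second, negative association gives correlation inequalities for monotone functionals but does not by itself yield that the $k$-model's crossing probability is below that of i.i.d.\ Bernoulli$(k/N^d)$, and in any case the exceptional events at each of the infinitely many scales still have to be controlled simultaneously. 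The paper avoids all of this by an entirely different device: run MFP$(p_0)$ with $q<p_0<p_c(d)$ and condition on the event $E$ that there is an infinite tree of retained cubes in which every cube of the tree contains at least $k(N)$ retained children (this is the ``$\infty$-good'' event of Section~\ref{sec subcritical}). Lemma~\ref{lem high prob} shows $\IP_{p_0}(E)>0$ for large $N$, and the coupling built in the proof of Theorem~\ref{thm subcritical} shows that the law of $D_{k(N)}$ is stochastically dominated by the conditional law of $D_{p_0}$ given $E$. Since $E$ has positive probability and the unconditional crossing probability under $\IP_{p_0}$ is zero, the conditional crossing probability is zero, hence $\theta(k(N),N,d)=0$. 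This conditioning step is the key idea your proposal is missing: it converts the problematic ``infinitely many exceptional scales'' into a single conditioning on a positive-probability event, after which the coupling holds at every scale with probability one.

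For the supercritical direction your sketch is closer in spirit to the paper. The paper does not, however, couple to MFP$(q')$ and then argue that the renormalized crossing ``survives deletion of the exceptional subcubes'': that would again require controlling exceptional events on infinitely many scales. Instead it runs the Falconer--Grimmett $(n,u)$-good renormalization directly on the $k$-model, and the new technical ingredient is Lemma~\ref{lem good3} together with the construction of the three coupled sets $G_1\supseteq G_2$, $G_3$ in the proof of Lemma~\ref{lem prob}. There $G_1$ is the set of $(n,u)$-good retained children of the cube under examination, $G_2$ is a uniform $k(N)$-subset independently thinned by a factor $1-\epsilon$ (which is dominated by $G_1$ via the inductive bound), and $G_3$ is a Bernoulli$(\pi)$ configuration with $p_c(d)<\pi<(1-\epsilon)p$; the lemma shows $\IP(G_2\supseteq G_3)\geq 1-\epsilon/2$, so the Falconer--Grimmett percolation lemma (Lemma~\ref{lem FalGri}) applies to $G_3$ and propagates goodness one level up, uniformly in $n$. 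This is a single-scale coupling used inside a renormalization whose error is summable by design, rather than a global coupling plus a robustness argument, and it is the place where the $k$-out-of-$N^d$ dependence is handled rigorously. Your sketch points in this direction but does not supply the $G_2$ intermediary or the concentration estimate (Lemma~\ref{lem good3}) that makes the Bernoulli comparison work.
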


\begin{remark} Note that the choice for the unit cube in the definitions of $\theta(k,N,d)$ and $\sigma(p,N,d)$ (and thus implicitly also in the definitions of $k_c(N,d)$ and $p_c(N,d)$) is rather arbitrary: We could define them in terms of crossings of other shapes such as annuli, for example, and obtain the same conclusion, i.e.\ $k_c(N,d)/N^d \to \pc$ as $N \to \infty$, where $\theta(k,N,d)$ and $k_c(N,d)$ are defined using the probability that $D_k$ crosses an annulus. One advantage of using annuli is that the percolation function $\sigma(p,N,d)$ is known to have a discontinuity at $p_c(N,d)$ for all $N,d$ and any choice of annulus \cite[Corollary 2.6]{BroCam}. (This is known to be the case also when $p_c(N,d)$ is defined using the unit cube if $d=2$ \cite{ChaChaDur, DekMee}, but for $d\geq 3$ it is proven only for $N$ sufficiently large \cite{BroCam08}.) In the present paper we stick to the ``traditional" choice of the unit cube.
\end{remark}

\begin{remark}\label{remark supercritical MFP}
For the MFP model it is the case  that, for $p>\pc$,
\begin{equation}\label{prob to 1}
\sigma(p,N,d) \to 1,  
\end{equation}
as $N \to \infty$. This is part (b) of Theorem 2 in \cite{FalGri}. During the course of the proof of Theorem \ref{thm critical k} we will prove a similar result for the $k$-model, see Theorem \ref{thm supercritical}.
\end{remark}

Next, consider the following generalization of both the $k$-model and the MFP model.
Let $d\geq2, N \geq 2$ be integers and let $Y=Y(N,d)$ be a random variable taking values in $\{0,\ldots,N^d\}$. Divide the unit cube into $N^d$ smaller cubes of side length $1/N$. Draw a realization $y$ according to $Y$ and retain $y$ cubes uniformly. Let $D^1_{Y}$ denote the closure of the union of the retained cubes. Next, every retained cube is divided into $N^d$ smaller subcubes of side length $1/N^2$. Then, for every subcube $C$ in $D^{1}_{Y}$ (where we slightly abuse notation by viewing $D^1_{Y}$ as the set of retained cubes in the first iteration step) draw a new (independent) realization $y(C)$ of $Y$ and retain $y(C)$ subcubes in $C$ uniformly, independently of all other subcubes. Denote the closure of the union of retained subcubes by $D^2_{Y}$. Repeat this procedure in every retained subcube at every smaller scale and define the limit set $D_{Y}:= \bigcap_{n=1}^{\infty}D^n_{Y}$. We will call this model the \emph{generalized fractal percolation model} (GFP model) \emph{with generator $Y$}. Define $\phi(Y,N,d)$ as the probability of the event that $[0,1]^d$ is crossed by $D_{Y}$.

By taking $Y$ equal to an integer $k$, resp.\ to a binomially distributed random variable with parameters $N^d$ and $p$, we obtain the $k$-model, resp.\ the MFP model with parameter $p$. If $Y$ is stochastically dominated by a binomial random variable with parameters $N^d$ and $p$, where $p<p_c(N,d)$, then by standard coupling techniques it follows that $\phi(Y,N,d)=0$. Likewise, if $Y(N,d)$ dominates a binomial random variable with parameters $N^d$ and $p$, where $p>\pc$, then $\phi(Y(N,d),N,d) \geq \sigma(p,N,d) \to 1$ as $N \to \infty$, as mentioned in Remark \ref{remark supercritical MFP}. The following theorem, which generalizes \eqref{prob to 1}, shows that the latter conclusion still holds if for some $p>\pc$, $\IP(Y(N,d) \geq p N^d)\to 1$ as $N \to \infty$.

\begin{theorem}\label{thm generalization}
 Consider the GFP model with generator $Y(N,d)$. Let $p>\pc$. Suppose that $\IP(Y(N,d) \geq pN^{d}) \to 1$ as $N \to \infty$. Then 
\[
\lim_{N \to \infty} \phi(Y(N,d),N,d) = 1.
\]
\end{theorem}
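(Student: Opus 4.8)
The plan is to deduce Theorem~\ref{thm generalization} from the corresponding statement for the $k$-model (Theorem~\ref{thm supercritical}) by a coupling, exploiting that the crossings produced by the renormalisation argument behind \eqref{prob to 1} and behind Theorem~\ref{thm supercritical} are robust under a small amount of extra thinning. Fix $p'$ with $\pc < p' < p$ and set $k_N := \lceil p'N^d \rceil$, so $k_N \le N^d$ for $N$ large and $k_N/N^d \to p'$; set $\delta_N := \IP(Y(N,d) < k_N) \le \IP(Y(N,d) < pN^d) \to 0$. At each cube of the construction, if the realised value of $Y(N,d)$ is at least $k_N$ we may couple the $k_N$-model so that its retained subcubes form a uniform subset of those retained by the $Y$-model, and otherwise (an event of probability $\delta_N$) we place no constraint. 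Calling a cube \emph{defective} when $Y(N,d) < k_N$ there, this gives $D_{Y} \supseteq D_{k_N} \cap M$, where $M$ is the limit set of an independent Mandelbrot fractal percolation with parameter $1-\delta_N$ whose absent cubes are exactly the defective ones, and $M$ is independent of $D_{k_N}$. Hence $\phi(Y(N,d),N,d) \ge \IP( D_{k_N} \cap M \text{ crosses } [0,1]^d)$, and it suffices to show the right-hand side tends to $1$ when $p'>\pc$, $k_N/N^d\to p'$ and $\delta_N \to 0$.

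For this I would re-run the Falconer--Grimmett renormalisation. The only way the model enters a single scale is through the retention rule, and there the relevant fact is that, conditionally on retaining $k_N$ of the $N^d$ subcubes, the set of retained subcubes dominates an i.i.d.\ density-$p''$ site percolation configuration (for any fixed $p''$ with $\pc < p'' < p'$) on the lattice of subcubes --- which is a box in $\IL^d$ --- off an event of probability at most $e^{-cN^d}$, by a large-deviation estimate for $\Bin(N^d,p'')$. For $p''>\pc$, a large box in $\IL^d$ is, with probability tending to $1$, crossed by a cluster that after coarse-graining into blocks of a large but fixed size $L$ gives a path of ``good'' blocks, two adjacent good blocks being linked through a macroscopic region. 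Plugging this in at every scale, a crossing of $[0,1]^d$ by $D_{k_N}\cap M$ follows from a compatible family of such block-paths across all scales --- here one uses that a decreasing sequence of compact sets, each containing a connected set meeting both $\{0\}\times[0,1]^{d-1}$ and $\{1\}\times[0,1]^{d-1}$, has intersection with the same property --- and the failure probability $\varepsilon_n$ of the appropriate self-reproducing ``robust crossing'' event at scale $n$ satisfies a recursion of the form $\varepsilon_{n+1} \le h(\varepsilon_n) + \rho_N$. Here $h(\varepsilon)/\varepsilon \to 0$ as $\varepsilon \downarrow 0$, and the driving term $\rho_N$ --- the per-scale failure probability, bounded by $\IP(Y(N,d) < pN^d) + e^{-cN^d} + \zeta(L)$ with $\zeta(L)\to 0$ as $L\to\infty$ --- is the same at every scale and tends to $0$ once $L$ is chosen large and then $N \to \infty$. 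Consequently $\lim_n \varepsilon_n \to 0$ and $\IP(D_{k_N}\cap M \text{ crosses } [0,1]^d) \to 1$.

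The genuinely hard work is entirely inside this last step and is not new: constructing the self-reproducing robust-crossing event and, above all, showing that the crossings inside two adjacent subcubes actually link up (this is where the lattice $\IL^d$, rather than the nearest-neighbour lattice, is forced) --- this is the technical heart of \cite{FalGri} and of the proof of Theorem~\ref{thm supercritical}, which I would cite or transcribe. The only new point here is bookkeeping: the extra independent $\delta_N$-thinning contributed by the defective cubes is harmless because it enters only through the driving term $\rho_N$, while the renormalisation supplies a fixed amount of slack --- the macroscopic width of the block-paths --- that does not depend on $N$, and $\delta_N \to 0$.
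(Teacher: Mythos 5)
Your coupling idea is sound: at a node $\Ib$ with $Y(\Ib)\ge k_N$ one can take the $k_N$-model's retained children to be a uniform $k_N$-subset of the $Y$-retained children, and otherwise choose them independently; the resulting $k_N$-subsets are still i.i.d.\ uniform and independent of the defectiveness indicators, so $D_{k_N}$ and $M$ are indeed independent with the right marginals. (One small indexing wrinkle: for $D_{k_N}^n\cap M^n\subseteq D_Y^n$ it is the \emph{parents} -- the root and all strict ancestors -- that must be non-defective, whereas the MFP $M$ as you define it records non-defectiveness of the cube itself and of its ancestors from level $1$ up. This is fixable by shifting the retention indicators by one generation, or by intersecting with the event $\{Y(\emptyset)\ge k_N\}$, which has probability $1-\delta_N\to 1$.) The real divergence from the paper is in what you do next. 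You reduce to $\IP(D_{k_N}\cap M\text{ crosses})\to 1$, which is a crossing statement for a GFP with generator $\Bin(k_N,1-\delta_N)$ -- i.e.\ a special case of the theorem you are proving -- and you propose to establish it by re-running the Falconer--Grimmett renormalisation with the extra $\delta_N$-thinning folded into the per-scale error term. That plan is workable (the per-scale bound of the type in Lemma~\ref{lem prob}/Lemma~\ref{lem good3} tolerates an extra independent $(1-\delta_N)$-thinning as long as $(1-\epsilon)(1-\delta_N)k_N/N^d$ stays above some $\pi>\pc$), but it means reopening the entire multi-scale argument. The paper avoids this: it conditions on the event $F$ that the unit cube is $\infty$-good (each cube in an infinite tree has at least $k(N)=\lfloor p_0N^d\rfloor$ retained subcubes), proves $\IP(F)\to 1$ by the same Chebyshev induction as Lemma~\ref{lem high prob}, and then shows directly that the law of $D_{k(N)}$ is stochastically dominated by the conditional law of $D_Y$ given $F$ -- no extra thinning factor appears -- so Theorem~\ref{thm supercritical} can be cited verbatim. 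Your route trades that one clean conditional domination for an unconditional one with an extra MFP factor, at the cost of having to redo the hard part rather than cite it.
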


\begin{remark} Observe that by Chebyshev's inequality the condition of Theorem \ref{thm generalization} is satisfied if, for some $p>\pc$, $\IE Y(N,d) \geq p N^d$ for all $N \geq 2$ and $\Var(Y(N,d))/N^{2d} \to 0$ as $N \to \infty$.
\end{remark}

\begin{openproblem} It is a natural question to ask whether a ``symmetric version'' of Theorem \ref{thm generalization} is true. That is, if e.g.\ $\IP(Y(N,d) \leq p N^d) \to 1$ as $N \to \infty$, for some $p<\pc$, implies $\phi(Y(N,d),N,d) \to 0$ as $N \to \infty$. The proof of Theorem \ref{thm generalization} can not be adapted to this situation.
\end{openproblem}

\subsection{Fat fractal percolation}

Let $(p_n)_{n \geq 1}$ be a non-decreasing sequence in $(0,1]$ such that $\prod_{n=1}^{\infty} p_n >0$. We call \emph{fat fractal percolation} a model analogous to the MFP model, but where at every iteration step $n$ a subcube is retained with probability $p_n$ and discarded with probability $1-p_n$, independently of other subcubes. Iterating this procedure yields a decreasing sequence of random subsets 
$D^1_{\text{fat}} \supset D^2_{\text{fat}} \supset D^3_{\text{fat}}\supset \cdots$ and we will mainly study connectivity properties of the limit set $D_{\text{fat}} := \bigcap_{n=1}^\infty D_{\text{fat}}^n$. In \cite{ChaPemPer} it is shown that if $p_n\to 1$ and $\prod_{n=1}^{\infty} p_n=0$, then the limit set does not contain a directed crossing from left to right.

For a point $x \in \Dfat$, let $C_{\text{fat}}^x$ denote its \emph{connected component}:
\[
 C_{\text{fat}}^x := \{y \in \Dfat: y \text{ connected to } x \text{ in } \Dfat\}.
\]
We define the set of ``dust'' points by $D^{d}_{\text{fat}}:= \{ x \in \Dfat: C_{\text{fat}}^x =\{x\} \}$. Define $D^{c}_{\text{fat}}:=\Dfat \setminus \Dfat^d$, which is the union of connected components larger than one point. Let $\lambda$ denote the $d$-dimensional Lebesgue measure. It is easy to prove that $\lambda(\Dfat)>0$ with positive probability, see Proposition \ref{prop expectation}. Moreover, we can show that the Lebesgue measure of the limit set is positive a.s.\ given non-extinction, i.e.\ $\Dfat \neq \emptyset$.

\begin{theorem}\label{thm12}
We have that $\lambda(\Dfat)>0$ a.s.\ given non-extinction.
\end{theorem}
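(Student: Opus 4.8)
The plan is to reduce the statement to the coincidence of two sequences of probabilities attached to a self‑similar fixed‑point recursion. Since $\{\Dfat=\emptyset\}\subseteq\{\lambda(\Dfat)=0\}$ always holds, the theorem is equivalent to $\IP(\lambda(\Dfat)=0)=\IP(\Dfat=\emptyset)$. For $k\ge0$ let $\Dfat^{[k]}$ denote the fat fractal percolation limit set built from the shifted sequence $(p_{n+k})_{n\ge1}$ — which again satisfies the standing hypotheses — and put
\[
q_k:=\IP\bigl(\Dfat^{[k]}=\emptyset\bigr),\qquad \eta_k:=\IP\bigl(\lambda(\Dfat^{[k]})=0\bigr),
\]
so that $q_k\le\eta_k$ and the theorem is exactly $\eta_0=q_0$. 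The first ingredient is the first–moment computation of Proposition~\ref{prop expectation}, which after the obvious rescaling gives $\IE\lambda(\Dfat^{[k]})=\prod_{n>k}p_n=:\pi_k$. Since $\prod_n p_n>0$ forces $p_n\to1$, we have $\pi_k>0$ for all $k$ and $\pi_k\to1$ as $k\to\infty$; combined with $\lambda(\Dfat^{[k]})\le1$ this yields $\eta_k\le 1-\IE\lambda(\Dfat^{[k]})=1-\pi_k$, so $\eta_k\to0$.

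Next I would use the branching structure. At the first level of $\Dfat^{[k]}$ the number of retained subcubes is $\Bin(N^d,p_{k+1})$, and conditionally on this number, say $m$, the parts of $\Dfat^{[k]}$ inside the retained subcubes are $m$ independent rescaled copies of $\Dfat^{[k+1]}$. Hence $\Dfat^{[k]}=\emptyset$ iff all these copies are empty, and $\lambda(\Dfat^{[k]})=0$ iff all of them have zero measure; conditioning on $m$ gives, with $g_k(x):=(1-p_{k+1}(1-x))^{N^d}$,
\[
q_k=g_k(q_{k+1}),\qquad \eta_k=g_k(\eta_{k+1}),\qquad k\ge0 .
\]
Each $g_k$ is increasing and convex on $[0,1]$ with $g_k(1)=1$ (it is a power of an increasing affine map), hence so is every finite composition $G_m:=g_0\circ g_1\circ\cdots\circ g_{m-1}$, and iterating the two recursions gives $G_m(q_m)=q_0$ and $G_m(\eta_m)=\eta_0$. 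For a convex $h:[0,1]\to[0,1]$ with $h(1)=1$ and $0\le a<b\le1$, writing $b=\tfrac{1-b}{1-a}\,a+\tfrac{b-a}{1-a}\,1$ gives $h(b)-h(a)\le\tfrac{b-a}{1-a}(1-h(a))\le\tfrac{b-a}{1-a}$; applying this with $h=G_m$, $a=q_m$, $b=\eta_m$ (note $q_m\le\eta_m\le1-\pi_m<1$) yields
\[
0\le\eta_0-q_0=G_m(\eta_m)-G_m(q_m)\le\frac{\eta_m-q_m}{1-q_m}\le\frac{\eta_m}{1-\eta_m}\le\frac{1-\pi_m}{\pi_m}.
\]
Letting $m\to\infty$ and using $\pi_m\to1$ forces $\eta_0=q_0$, which is the theorem.

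The main obstacle is precisely this last step. The recursion $x_k=g_k(x_{k+1})$ by itself does not determine the sequence: $\eta_0=q_0$ would follow from $\eta_1=q_1$, which would follow from $\eta_2=q_2$, and so on, so no purely ``local'' argument can work. What breaks the circularity is the quantitative convexity bound above together with the fact that the environment trivialises at infinity (as $p_n\to1$ one has $\pi_m\to1$ and $\eta_m\to0$): each $G_m$ is a convex map fixing $1$, so it contracts the shrinking interval $[q_m,\eta_m]$ down to a controlled multiple of its length. (Equivalently, one may run the argument through the nonnegative martingale obtained by normalising $\lambda(D^n_{\text{fat}})$ by its mean; its almost sure limit $W$ satisfies $\lambda(\Dfat)=W\prod_n p_n$, and the theorem amounts to showing that $\{W=0\}$ agrees almost surely with extinction.) Everything else is routine: continuity of $\lambda$ along the decreasing sets $D^n_{\text{fat}}$, the compactness fact that $\Dfat=\emptyset$ iff the associated branching process dies out, and the elementary monotonicity and convexity of the $g_k$.
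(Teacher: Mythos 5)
Your proposal is correct, and it takes a genuinely different route from the paper. The paper's proof is a martingale argument: it tracks $Z_n$, the number of retained level-$n$ cubes, normalises it to a nonnegative martingale $W_n=Z_n/\prod_{i\le n}p_iN^d$, and then invokes Theorem 4.14 of Lyons for branching processes in a time-varying environment (uniformly bounded offspring, so $\sup_n\|L_n\|_\infty<\infty$) to conclude $W=\lim W_n>0$ a.s.\ on non-extinction; since $\lambda(\Dfat)=(\prod_ip_i)W$, the theorem follows. Your argument instead rephrases the claim as the identity $\eta_0=q_0$ between the null-measure and extinction probabilities, observes that the shifted probabilities $q_k$ and $\eta_k$ obey the \emph{same} self-similar recursion $x_k=g_k(x_{k+1})$ with $g_k(x)=(1-p_{k+1}(1-x))^{N^d}$ convex, increasing and fixing $1$, and then closes the loop with the quantitative convexity estimate $G_m(\eta_m)-G_m(q_m)\le(\eta_m-q_m)/(1-q_m)$ combined with the Markov bound $\eta_m\le 1-\pi_m\to0$. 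The trade-off is clear: your version is entirely self-contained and elementary (only the first-moment formula of Proposition~\ref{prop expectation}, the branching decomposition, and convexity of generating functions), whereas the paper's is shorter but leans on an external result about $L\log L$-type conditions for varying-environment Galton--Watson processes. Both the identification of $q_k,\eta_k$ with generating-function iterates and the convexity bound you state check out, and the reduction ``theorem $\Leftrightarrow\eta_0=q_0$'' is exactly right since $\{\Dfat=\emptyset\}\subset\{\lambda(\Dfat)=0\}$. Your parenthetical remark that the argument can equivalently be run through the martingale $W$ correctly identifies the bridge to the paper's proof.
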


It is a natural question to ask whether both $\Dfatc$ and $\Dfatd$ have positive Lebesgue measure. The following theorem shows that they cannot simultaneously have positive Lebesgue measure.

\begin{theorem}\label{thm lebesgue}
Given non-extinction of the fat fractal process, it is the case that either 
\begin{equation}\label{fat con}
\lambda(\Dfatd) =0 \text{ and } \lambda(\Dfatc)>0 \text{ a.s.}                                                                              
\end{equation}
 or
\begin{equation}\label{fat dust}
\lambda(\Dfatd) >0 \text{ and } \lambda(\Dfatc)=0 \text{ a.s.}
\end{equation}
\end{theorem}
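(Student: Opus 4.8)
The plan is to establish a zero-one law: I want to show that the event that $\lambda(\Dfatd) > 0$ given non-extinction has probability either $0$ or $1$, and similarly for $\Dfatc$. The natural tool is the self-similar structure of the fat fractal process. Condition on the first level $D^1_{\text{fat}}$ being nonempty; inside each retained level-$1$ subcube $C$, the process (rescaled) has the law of a fat fractal process with the shifted sequence $(p_{n+1})_{n\geq 1}$, which still satisfies $\prod_n p_{n+1} > 0$ and non-decreasing. The key structural observation is that whether a point $x$ in a retained level-$1$ cube $C$ lies in the dust set of $\Dfat$ is \emph{almost} an intrinsic property of the process inside $C$: the only way $x$ could belong to a larger component of $\Dfat$ without belonging to a larger component of $\Dfat \cap C$ is if $x$ lies on the boundary $\partial C$ and connects to a neighbouring level-$1$ cube. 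Since $\bigcup_C \partial C$ has Lebesgue measure zero, we have $\lambda(\Dfatd \cap C) = \lambda(\Dfatd^{(C)})$ up to a null set, where $\Dfatd^{(C)}$ denotes the dust set of the rescaled subprocess in $C$.

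With this in hand, I would argue as follows. Let $A_d$ be the event $\{\lambda(\Dfatd) > 0\}$ and let $q_1 = \IP(A_d \mid \text{non-extinction})$; more importantly let $q$ be the analogous conditional probability for a fat fractal process with a general tail sequence. By the decomposition above, $\lambda(\Dfatd) > 0$ if and only if $\lambda(\Dfatd^{(C)}) > 0$ for at least one retained level-$1$ cube $C$. The subprocesses in distinct retained cubes are i.i.d., and conditionally on which cubes are retained and on non-extinction of the overall process, at least one subprocess survives. If the conditional probability that a surviving subprocess has positive-measure dust were some $q \in (0,1)$, I would try to derive a contradiction by iterating: passing to level $n$, the event $A_d$ is determined by whether \emph{any} of the (a.s.\ infinitely many, on non-extinction) surviving level-$n$ cubes has a descendant subprocess with positive-measure dust, and each such event is independent with the same conditional probability $q_n \to q_\infty$. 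A Borel--Cantelli / Kolmogorov zero-one type argument on this branching structure should force $q_\infty \in \{0,1\}$, and then a short argument propagates this back to level $0$.

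Once the zero-one law is established — say $\lambda(\Dfatd) = 0$ a.s.\ on non-extinction, or $\lambda(\Dfatd) > 0$ a.s.\ on non-extinction, and likewise for $\Dfatc$ — it remains to rule out the two ``mixed'' combinations. By Theorem~\ref{thm12}, $\lambda(\Dfat) = \lambda(\Dfatd) + \lambda(\Dfatc) > 0$ a.s.\ on non-extinction, so we cannot have both measures equal to $0$; this leaves exactly the two alternatives \eqref{fat con} and \eqref{fat dust}, plus the a priori possibility that \emph{both} are positive a.s. To exclude that last case I would use a Fubini/mass-transport argument: $\IE \lambda(\Dfatc) = \int_{[0,1]^d} \IP(x \in \Dfat,\ C_{\text{fat}}^x \neq \{x\})\, dx$, and by translation structure within the construction (and a covering argument over the countably many dyadic-type subcubes) one reduces to showing that a fixed ``generic'' point $x$ has conditional probability zero of lying in a nondegenerate component whenever the dust has full measure. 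Concretely, if $\lambda(\Dfatd) > 0$ a.s.\ then by the zero-one law the \emph{conditional} probability, given $x \in \Dfat$, that $x$ is a dust point is some constant $\rho$; showing $\rho = 1$ (equivalently $\lambda(\Dfatc) = 0$ a.s.) is what rules out coexistence. This uniform-over-$x$ statement should again follow from self-similarity: the event $\{x \in \Dfatc\}$ forces, at every scale $n$, the level-$n$ cube containing $x$ to connect to an adjacent retained level-$n$ cube, and one estimates these connection probabilities using the structure of the surviving set.

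I expect the main obstacle to be the coexistence step, i.e.\ proving that the two events $\{\lambda(\Dfatd)>0\}$ and $\{\lambda(\Dfatc)>0\}$ cannot both have probability one. The zero-one law itself is fairly mechanical given the i.i.d.\ branching decomposition, but excluding coexistence requires genuinely understanding when a \emph{positive-measure} set of points can each sit in a nontrivial component while simultaneously a positive-measure set is dust — these are not obviously mutually exclusive at the level of a single realization, so the argument must be probabilistic (Fubini plus the zero-one law plus a scale-by-scale connectivity estimate) rather than pointwise-geometric. Care will also be needed to handle the boundary cubes correctly when asserting that dust-ness of a point is an intrinsic property of the subprocess containing it, but since boundaries are Lebesgue-null this should only cost a routine null-set bookkeeping.
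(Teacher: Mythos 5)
Your proposal correctly identifies the tools in play (Fubini, self-similarity of the process, Theorem~\ref{thm12}, a 0--1 law), and you have rightly singled out the coexistence question --- can both $\lambda(\Dfatd)>0$ and $\lambda(\Dfatc)>0$ hold a.s.\ simultaneously? --- as the crux. But precisely there your argument has a genuine gap: you sketch no mechanism to exclude coexistence, and the two global 0--1 laws you propose (for $\{\lambda(\Dfatd)>0\}$ and $\{\lambda(\Dfatc)>0\}$) would in fact be perfectly consistent with both events having probability one. Your closing suggestion --- that ``the conditional probability, given $x\in\Dfat$, that $x$ is a dust point is some constant $\rho$'' and one should show $\rho\in\{0,1\}$ uniformly over $x$ --- is the right idea, but it is not a consequence of the global 0--1 law you set up; it is a separate, \emph{pointwise} 0--1 statement that requires its own argument, and that argument is actually the whole proof.

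The paper works entirely at the pointwise level and thereby never confronts the coexistence step as a separate problem. For each $x$ off the grid boundaries it defines the event $A^x$ that, after forcing retention of every ancestor cube of $x$, the component $C_{\text{fat}}^x$ is nontrivial; this is a tail event in the natural filtration, so $\IP(A^x)\in\{0,1\}$. The nontrivial content is then showing that this value is the \emph{same} for Lebesgue-a.e.\ $x$. This is done via the measure-preserving shift $T$ on the coding space (which sends $x$ to the rescaled position of $x$ inside its level-1 cube) together with a coupling that uses the \emph{monotonicity} $p_n \leq p_{n+1}$ to prove $\IP(A^{Tx})\leq \IP(A^x)$; this gives $V\subset T^{-1}V$ for $V=\{x:\IP(A^x)=0\}$, whence $\lambda(V\,\Delta\,T^{-1}V)=0$ and ergodicity of $T$ forces $\lambda(V)\in\{0,1\}$. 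Once $\IP(A^x)$ is a.e.\ constant, Fubini yields $\IE\lambda(\Dfatd)=0$ or $\IE\lambda(\Dfatc)=0$, and Theorem~\ref{thm12} settles the remaining alternative. Notice that both the monotonicity of $(p_n)$ and the ergodicity of the shift are indispensable ingredients that your proposal does not invoke; without them there is no reason $\IP(A^x)$ (or your $\rho$) should be independent of $x$. Also, your branching-type 0--1 law sketch has secondary issues (the level-$n$ subprocesses use the shifted sequence $(p_{n+j})_j$, so they are not identically distributed across $n$, and the claimed limit $q_n\to q_\infty\in\{0,1\}$ is unsupported), but even if patched it would not close the coexistence gap, which is what the theorem really asserts.
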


Part (ii) of the following theorem gives a sufficient condition under which \eqref{fat con} holds. Furthermore, the theorem shows that the limit set either has an empty interior a.s.\ or can be written as the union of finitely many cubes a.s.

\begin{theorem}\label{thm charac} We have that
\begin{itemize}
 \item[$(i)$] If $\prod_{n=1}^{\infty} p_n^{N^{dn}} = 0$, then $\Dfat$ has an empty interior a.s.;
\item[$(ii)$] If $\prod_{n=1}^{\infty} p_n^{N^n} > 0$, then $\lambda(\Dfatd) = 0$ a.s.;
\item[$(iii)$] If $\prod_{n=1}^{\infty} p_n^{N^{dn}} > 0$, then $\Dfat$ can be written as the union of finitely many cubes a.s.
\end{itemize}
\end{theorem}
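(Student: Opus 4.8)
The plan is to prove the three parts by first--moment estimates on suitable counting statistics of the construction, part $(ii)$ being the only delicate point; I treat them in the order $(i)$, $(iii)$, $(ii)$. Throughout, call a retained cube $Q$ at level $n$ (so that $Q$ and all its ancestors are retained) \emph{good} if $Q\subseteq\Dfat$, equivalently if every descendant of $Q$ is retained; a short computation gives that, conditionally on being retained, $Q$ is good with probability $\beta_n:=\prod_{j\ge1}p_{n+j}^{N^{dj}}=\bigl(\prod_{m>n}p_m^{N^{dm}}\bigr)^{N^{-dn}}$, and unconditionally with probability at most $\beta_n$. For $(i)$, one first checks that $\Dfat$ has nonempty interior if and only if some good cube exists: a good cube lies in $\Dfat$ and has nonempty interior, while conversely a ball inside $\Dfat$ contains a level-$n$ cube $Q$ for $n$ large, and $Q\subseteq\Dfat$ then forces $Q$ to be good, since within $Q$ each stage of the construction is a union of retained sub-cubes (up to their boundaries). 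As there are $N^{dn}$ cubes at level $n$, a union bound yields $\IP(\Dfat\text{ has nonempty interior})\le\sum_{n\ge1}N^{dn}\beta_n$; and if $\prod_m p_m^{N^{dm}}=0$, then $\prod_{m>n}p_m^{N^{dm}}=0$ for every $n$ (the finite initial product being positive), so $\beta_n=0$ for all $n$, the bound is $0$, and $(i)$ follows.

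For $(iii)$, call a retained level-$n$ cube a \emph{boundary cube} if at least one of its $N^d$ children is discarded. The key point is that $\Dfat$ is a finite union of cubes as soon as only finitely many boundary cubes ever appear: if $n_0$ exceeds the level of every boundary cube, then every retained level-$n_0$ cube has all its children retained, hence---by induction on the level---all its descendants retained, hence is good, so $\Dfat$ equals the $n_0$-th stage of the construction, a union of at most $N^{dn_0}$ cubes. Since each discarded level-$(n+1)$ cube has a unique parent, the number of boundary cubes at level $n$ is at most the number of discarded level-$(n+1)$ cubes, so that
\[
\IE\bigl[\#\{\text{boundary cubes}\}\bigr]\ \le\ \sum_{n\ge1}N^{d(n+1)}(1-p_{n+1})\ \le\ \sum_{m\ge1}N^{dm}(-\ln p_m),
\]
using $1-p\le-\ln p$. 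The right-hand sum is finite precisely because $\prod_m p_m^{N^{dm}}>0$, so a.s.\ only finitely many boundary cubes appear, proving $(iii)$.

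For $(ii)$, the idea is that under $\prod_m p_m^{N^m}>0$ almost every point of $\Dfat$ lies on a nondegenerate line segment contained in $\Dfat$. Fix $x=(x_1,\dots,x_d)$ off all cube boundaries and condition on the event $x\in\Dfat$, which has probability $\prod_m p_m>0$. For $n\ge1$, let $S_n$ be the maximal segment through $x$ parallel to the first coordinate axis that is contained in the level-$(n-1)$ cube containing $x$. Then $\{S_n\subseteq\Dfat\}$ is exactly the event that every level-$m$ cube ($m\ge n$) meeting $S_n$ is retained; these cubes form a ``rod'' of $N^{m-n+1}$ cubes, exactly one of which, on $x$'s own branch, is retained under the conditioning, whence
\[
\IP\bigl(S_n\subseteq\Dfat\mid x\in\Dfat\bigr)\ =\ \prod_{m\ge n}p_m^{\,N^{m-n+1}-1}.
\]
Taking logarithms, the exponent sum is bounded by $\sum_{m\ge n}N^{m-n+1}(-\ln p_m)=N^{1-n}\sum_{m\ge n}N^{m}(-\ln p_m)$, which tends to $0$ because $\sum_m N^m(-\ln p_m)<\infty$ is exactly the hypothesis $\prod_m p_m^{N^m}>0$. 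Since $S_{n+1}\subseteq S_n$, the events $\{S_n\subseteq\Dfat\}$ are increasing in $n$, so $\IP(\exists\, n:\ S_n\subseteq\Dfat\mid x\in\Dfat)=1$; and $S_n\subseteq\Dfat$ places a nondegenerate segment in the connected component of $x$, so that $x\notin\Dfatd$. Hence $\IP(x\in\Dfatd)=0$ for Lebesgue--a.e.\ $x$, and $\IE[\lambda(\Dfatd)]=0$ by Fubini, i.e.\ $\lambda(\Dfatd)=0$ a.s. (Alternatively, the same computation applied simultaneously to all segments $[0,1]\times\{y\}$, $y\in[0,1]^{d-1}$, shows that the set $\{y:[0,1]\times\{y\}\subseteq\Dfat\}$ has expected $(d-1)$-dimensional Lebesgue measure $\prod_m p_m^{N^m}>0$, so $\lambda(\Dfatc)>0$ with positive probability; Theorem~\ref{thm lebesgue} then excludes \eqref{fat dust}, hence \eqref{fat con} holds.)

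The moment computations above, and the elementary facts about $\Dfat$ as a decreasing intersection of unions of closed cubes, are routine. The one genuine subtlety is in $(ii)$: recognising that $\prod_m p_m^{N^m}>0$ is exactly the condition guaranteeing that a one-dimensional rod inside a level-$n$ cube survives with probability tending to $1$ as $n\to\infty$---so that, modulo a Lebesgue-null set, $\Dfat$ is a union of segments rather than of cubes---together with checking carefully that lying on such a surviving segment removes a point from the dust set $\Dfatd$.
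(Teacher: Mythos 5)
Your proof is correct, and all three parts take routes that differ somewhat from the paper's, most notably in part (ii). For (i), the paper argues by contraposition with the FKG inequality: from $\IP(C(\Ii_1,\ldots,\Ii_n)\subset\Dfat)>0$ for one cube it deduces the same for all cubes at that level by translation invariance, then $\IP(\Dfat=[0,1]^d)=\prod p_n^{N^{dn}}>0$. Your union bound over ``good'' level-$n$ cubes, together with the observation that the tail product $\prod_{m>n}p_m^{N^{dm}}$ already vanishes, avoids FKG and is arguably more elementary. For (iii), the paper applies Borel--Cantelli directly to the events $\{\Dfat^n\neq\Dfat^{n-1}\}$, which is a touch cleaner than your first-moment count of ``boundary cubes,'' but the two are essentially the same idea. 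The real divergence is in (ii): the paper shows $\IP(\{x\}\times[0,1]\subset\Dfat)\geq\prod p_n^{N^n}>0$, deduces via Fubini that $\lambda(\Dfatc)>0$ with positive probability, and then invokes Theorem~\ref{thm lebesgue} to upgrade this to $\lambda(\Dfatd)=0$ a.s. Your primary argument instead works locally: conditioning on $x\in\Dfat$ and letting $n\to\infty$, the rod $S_n$ through $x$ inside its level-$(n-1)$ cube survives with conditional probability tending to $1$ (because the exponent $\sum_{m\geq n}(N^{m-n+1}-1)(-\ln p_m)\to 0$), and by monotonicity of the events $\{S_n\subseteq\Dfat\}$ one gets $\IP(x\in\Dfatd)=0$ directly, then Fubini. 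This is a genuine improvement in structure: it makes part (ii) self-contained and independent of the dichotomy in Theorem~\ref{thm lebesgue}, which the paper's proof relies on. Your parenthetical alternative at the end of (ii) is exactly the paper's argument.
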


\begin{openproblem} 
Part (ii) of Theorem \ref{thm charac} shows that if $\prod_{n=1}^{\infty} p_{n}^{N^n} > 0$, then \eqref{fat con} holds. However, we do not have an example for which \eqref{fat dust} holds, and we do not know whether \eqref{fat dust} is possible at all.
\end{openproblem}

In two dimensions, we have the following characterizations of $\lambda(\Dfat^c)$ being positive a.s.\ given non-extinction of the fat fractal process.

\begin{theorem} \label{thm pos prob} Let $d=2$. The following statements are equivalent.
\begin{itemize}
 \item[$(i)$] $\lambda(D^c_{\text{fat}}) > 0$ a.s., given non-extinction of the fat fractal process;
\item[$(ii)$] There exists a set $U\subset [0,1]^2$ with $\lambda(U)>0$ such that for all $x,y \in U$ it is the case that $\IP( x \text{ is in the same connected component as } y) >0$;
\item[$(iii)$] There exists a set $U\subset [0,1]^2$ with $\lambda(U)=1$ such that for all $x,y \in U$ it is the case that $\IP( x \text{ is in the same connected component as } y) >0$.
\end{itemize}
\end{theorem}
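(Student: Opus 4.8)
The plan is to establish the cycle of implications $(iii)\Rightarrow(ii)\Rightarrow(i)\Rightarrow(iii)$. The first is immediate. For $(ii)\Rightarrow(i)$, let $\mu$ denote the random measure $\lambda|_{\Dfat}$; since ``$x$ connected to $y$ in $\Dfat$'' forces $x,y\in\Dfat$, Tonelli's theorem, (ii) and $\lambda(U)>0$ give
\[
\IE\!\left[(\mu\times\mu)\{(x,y):x\text{ connected to }y\}\right]\ =\ \int\!\!\int\IP(x\text{ connected to }y)\,dx\,dy\ \geq\ \int_{U}\!\!\int_{U}\IP(x\text{ connected to }y)\,dx\,dy\ >\ 0.
\]
Hence with positive probability $(\mu\times\mu)\{x\text{ connected to }y\}>0$, and applying Tonelli in the other order this forces $\lambda\{x\in\Dfat:\lambda(C_{\text{fat}}^{x})>0\}>0$, and hence $\lambda(\Dfatc)>0$ (a singleton being $\lambda$-null), with positive probability. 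By Theorem~\ref{thm lebesgue} the model satisfies exactly one of \eqref{fat con}, \eqref{fat dust}; since $\{\lambda(\Dfatc)>0\}$ has positive probability and entails non-extinction, \eqref{fat dust} is excluded and \eqref{fat con} holds, which is (i).

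The substance is $(i)\Rightarrow(iii)$. Assuming (i), Theorem~\ref{thm lebesgue} places us in case \eqref{fat con}, so $\lambda(\Dfatd)=0$ a.s.\ and hence $\IP(x\in\Dfatd)=0$ for Lebesgue-a.e.\ $x$. Since also $\IP(x\in\Dfat)=\prod_{n=1}^{\infty}p_n>0$ for a.e.\ $x$ (the exceptions lying on cube boundaries of the construction), it follows that for a.e.\ $x$, conditionally on the positive-probability event $\{x\in\Dfat\}$, the component $C_{\text{fat}}^{x}$ a.s.\ contains a point other than $x$ and is therefore a continuum of positive diameter. Write $C_m(x)$ for the level-$m$ construction cube containing $x$. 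A point-set topology step — components and quasi-components coincide in the compact set $\Dfat$ — shows that whenever $C_{\text{fat}}^{x}\not\subseteq C_m(x)$, the connected component of $x$ in $\Dfat\cap C_m(x)$ already meets $\partial C_m(x)$. Since the events $\{C_{\text{fat}}^{x}\not\subseteq C_m(x)\}$ increase in $m$ to an event of conditional probability $1$, and since (by monotonicity of $(p_n)$ and $\prod_n p_n>0$) the subprocess inside $C_m(x)$ given $C_m(x)$ retained is itself a genuine fat-fractal process, we conclude: for a.e.\ $x$ and all large $m$, the cluster of $x$ reaches $\partial C_m(x)$ inside $\Dfat\cap C_m(x)$ with positive probability, and, refining over the finitely many faces, it reaches some prescribed face of $C_m(x)$ with positive probability.

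Given a.e.\ pair $(x,y)$ with $x\neq y$, pick $m$ so large that $C_m(x)\neq C_m(y)$ and the previous paragraph applies to both; let $f_x,f_y$ be faces reached by the respective clusters with positive probability. One then lower-bounds $\IP(x\text{ connected to }y)$ by the probability of the event that the cluster of $x$ reaches $f_x$ inside $C_m(x)$, the cluster of $y$ reaches $f_y$ inside $C_m(y)$, and a fixed finite ``skeleton'' of the construction grid joining $f_x$ to $f_y$ is retained through all scales, which forces these pieces to join up. All events involved are increasing, so by the Harris/FKG inequality for the product measure governing the fat fractal the joint probability is at least the product of the (positive) factors, hence positive. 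Thus $\IP(x\text{ connected to }y)>0$ on a subset of $[0,1]^2\times[0,1]^2$ of full $\lambda\times\lambda$-measure. Finally, the same FKG inequality applied to $\{x\text{ connected to }z\}$ and $\{z\text{ connected to }y\}$ gives $\IP(x\text{ connected to }z)\,\IP(z\text{ connected to }y)\leq\IP(x\text{ connected to }y)$, so ``connected with positive probability'' is transitive on the full-measure set where it is also reflexive and symmetric; its equivalence classes partition $[0,1]^2$ up to a null set, and since (by the previous sentence and Fubini) a.e.\ class has full measure, there is a single co-null class $U$. Any $x,y\in U$ then lie in the same class, which is statement (iii).

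The step I expect to be the genuine obstacle is the linking in the third paragraph. The clusters of $x$ and $y$ meet their cube faces at \emph{random} points, so a naive chain of ``crossed'' subcubes need not connect them; one must instead exhibit a robust connecting mechanism — for instance, portions of the construction grid that survive at every scale — and prove that such pieces occur with positive probability in the regime \eqref{fat con}. This is where the specific planar fat-fractal geometry (presumably via planar duality or RSW-type crossing estimates) has to be exploited, and it is the part of the argument that does not reduce to soft measure-theoretic manipulation; the point-set topology lemma and the FKG/transitivity bookkeeping are comparatively routine.
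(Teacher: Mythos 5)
Your cycle $(iii)\Rightarrow(ii)\Rightarrow(i)$ is correct and matches the paper (the paper also runs Fubini on $\IE\,\lambda(\Dfatc)$ and then invokes Theorem~\ref{thm lebesgue} to upgrade positive probability to ``a.s.\ given non-extinction''). The genuine gap is exactly where you flag it, in $(i)\Rightarrow(iii)$, and the mechanism you sketch does not close it. The event ``a fixed finite skeleton of the construction grid joining $f_x$ to $f_y$ is retained through all scales'' has probability zero under the standing hypothesis $\prod_n p_n>0$ alone: keeping even a one-dimensional line segment of the grid inside $\Dfat$ has probability comparable to $\prod_n p_n^{N^n}$, and keeping a two-dimensional patch costs $\prod_n p_n^{N^{2n}}$; neither product is assumed positive (these are precisely the extra hypotheses of Theorem~\ref{thm charac}$(ii)$ and $(iii)$). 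Moreover, even if such a skeleton survived, the cluster of $x$ meets $\partial C_m(x)$ at a random point and nothing forces it to touch the skeleton, as you yourself note.

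The paper's resolution of the linking problem uses planar crossings rather than surviving grid pieces. For $x\in V$ one first finds $n_x$ with $\IP\bigl(x\text{ connected to }S(x,\tfrac{1}{2n_x})\bigr)>0$, then defines the increasing event $\Gamma_x$ that the (at least) two axis-parallel strips of width $\tfrac{1}{4n_x}$ through $x$ that lie in $[0,1]^2$ each contain a full-length crossing of $\Dfat$. Any continuum in $\Dfat$ joining $x$ to $S(x,\tfrac{1}{2n_x})$ must hit those crossings by planarity, and the crossings for $x$ and for $y$ necessarily intersect because each spans the whole square; so $\{x\leftrightarrow S(x,\tfrac{1}{2n_x})\}\cap\Gamma_x\cap\{y\leftrightarrow S(y,\tfrac{1}{2n_y})\}\cap\Gamma_y\subset\{x\leftrightarrow y\}$, and FKG finishes. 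Positivity of $\IP_{\text{fat}}(\Gamma_x)$ is obtained by conditioning on complete retention up to a level $n_0$ with $p_n\geq p_c(N,2)$ for $n\geq n_0$ (possible since $p_n\uparrow$ and $\prod p_n>0$), under which the fat process dominates critical MFP, whose strip crossings have positive probability by Theorem~2 of \cite{ChaChaDur} together with the RSW lemma of \cite{DekMee}. Note this also lets the paper take $U=V$ directly, sidestepping your final transitivity/equivalence-class bookkeeping, which in any case would need more care: full $\lambda\times\lambda$-measure of $\{(x,y):\IP(x\leftrightarrow y)>0\}$ does not by itself yield a single co-null $U$ on which the relation holds pairwise without the explicit argument you begin to give.

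A smaller point: in $(ii)\Rightarrow(i)$ your detour through the random measure $\mu=\lambda|_{\Dfat}$ is fine but unnecessary; the paper simply writes $\IE\lambda(\Dfatc)=\int\IP(x\in\Dfatc)\,d\lambda(x)\geq\int_{U\setminus\{y\}}\IP(x\leftrightarrow y)\,d\lambda(x)>0$ for a fixed $y\in U$, which is shorter and avoids having to argue that $\lambda\{x\in\Dfat:\lambda(C^x_{\text{fat}})>0\}>0$ implies $\lambda(\Dfatc)>0$.
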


Let us now outline the rest of the paper. The next section will be devoted to a formal introduction of the fractal percolation processes in the unit cube. We also define an ordering on the subcubes which will facilitate the proofs of Theorems \ref{thm critical k} and \ref{thm generalization} in Section \ref{sec proof k fractal}. In Section \ref{sec main fat} we prove our results concerning fat fractal percolation.

\section{Preliminaries}\label{sec pre}

In this section we set up an ordering for the subcubes of the fractal processes in the unit cube which will turn out to be very useful during the course of the proofs. We also give a formal probabilistic definition of the different fractal percolation models.
We follow \cite{FalGri} almost verbatim in this section; a simple reference to \cite{FalGri} would however not be very useful for the reader, so we repeat some definitions here.

Order $J^d:=\{0,1,\ldots,N-1\}^d$ in some way, say lexicographically by coordinates. For a positive integer $n$, write $J^{d,n}:=\{(\Ii_1,\ldots,\Ii_n): \Ii_j \in J^d, 1 \leq j \leq n \}$ for the set of $n$-vectors with entries in $J^d$. Set $J^{d,0} := \{\emptyset\}$. With $\Ib = (\Ii_1,\ldots,\Ii_n) =((i_{1,1},\ldots,i_{1,d}),\ldots,(i_{n,1},\ldots,i_{n,d}))$ we associate the subcube of $[0,1]^d$ given by
\[
 C(\Ib) = c(\Ib) +[0,N^{-n}]^d,
\]
where 
\[
 c(\Ib) = \left(\sum_{j=1}^{n}N^{-j}i_{j,1},\ldots,\sum_{j=1}^{n}N^{-j}i_{j,d}\right)
\]
and $c(\emptyset)$ is defined to be the origin.
Such a cube $C(\Ib)$ is called a \emph{level-$n$ cube} and we write $|\Ib| = n$. A concatenation of $\Ib \in J^{d,n}$ and $\Ij \in J^d$ is denoted by $(\Ib,\Ij)$, which is in $J^{d,n+1}$. We define the set of indices for all cubes until (inclusive) level-$n$ as $\JJ^{(n)}:=J^{d,0} \cup J^{d,1} \cup \cdots \cup J^{d,n}$ and we order them in the following way. We declare $\Ib=(\Ii_1,\ldots,\Ii_a)<\Ib'=(\Ii_1',\ldots,\Ii_b')$ if and only if
\begin{itemize}
 \item \emph{either} $\Ii_r < \Ii_r'$ (according to the order on $J^d$) where $r \leq \min\{a,b\}$ is the smallest index so that $\Ii_r \neq \Ii_r'$ holds;
\item \emph{or} $a>b$ and $\Ii_r = \Ii_r'$ for $r=1,\ldots,b$.
\end{itemize}
To clarify this ordering we give a short example, see Figure \ref{fig ordering}. Suppose $N=2$, $d=2$ and $J^2$ is ordered by $(1,1)>(1,0)>(0,1)>(0,0)$, then the ordering of $\JJ^{(2)}$ starts with
\begin{eqnarray*}
 \emptyset &>& ((1,1))> ((1,1),(1,1))> ((1,1),(1,0)) \\
&>& ((1,1),(0,1)) >((1,1),(0,0)) > ((1,0)) > \ldots
\end{eqnarray*}

\begin{figure}[!ht]
\begin{center}
\includegraphics[width=7cm]{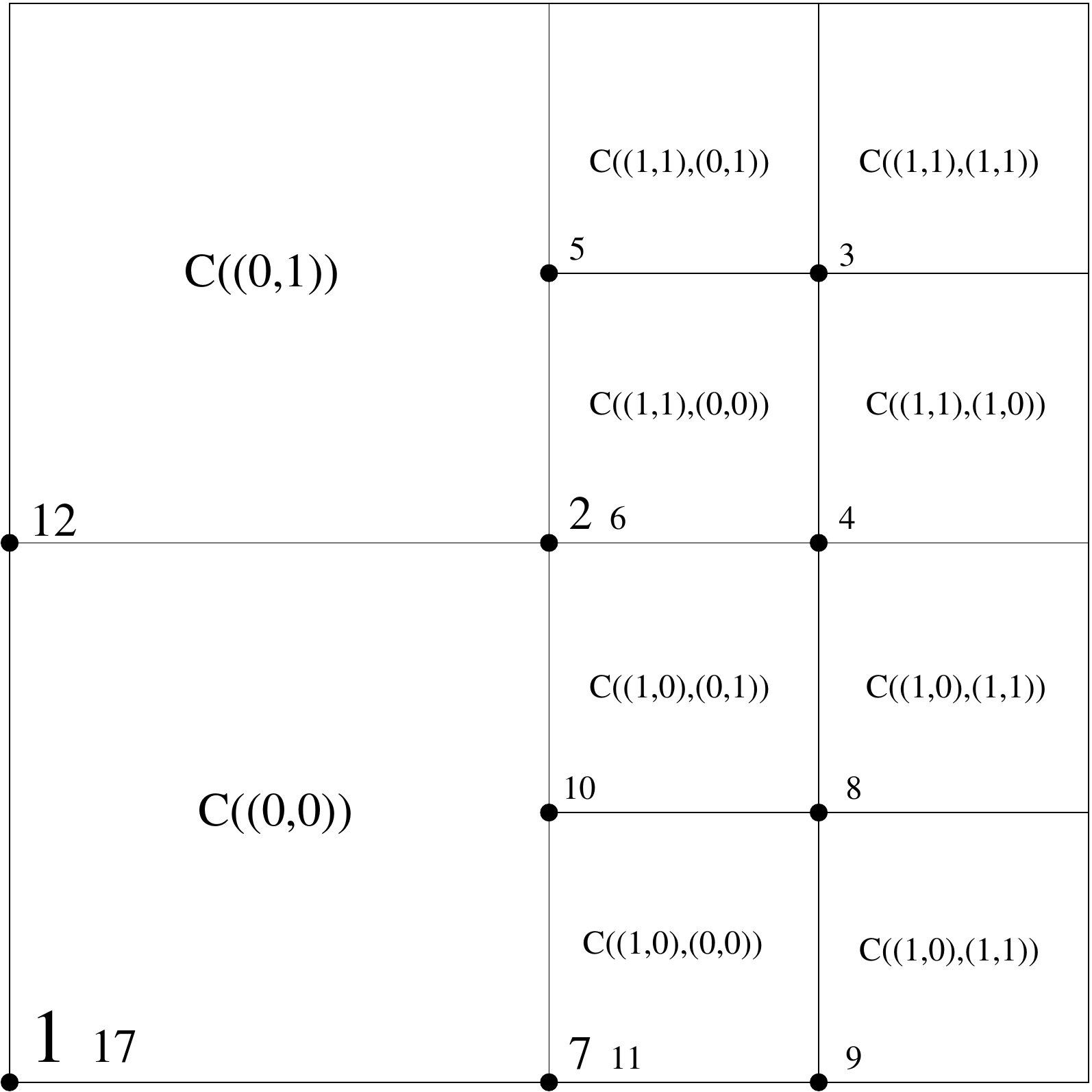}
\caption{Illustration of the ordering of subcubes in $\JJ^{(2)}$, for $N=2$ and $d=2$. A black dot denotes the corner point $c(\Ib)$ of a subcube $C(\Ib)$. The number in the lower left corner of a subcube indicates the rank of the subcube in the ordering: e.g.\ the unit cube, i.e.\ $C(\emptyset)$, has rank 1 and $C((0,0))$ has rank 17.  }
\label{fig ordering}
\end{center}
\end{figure}

We introduce the following formal probabilistic definition of the fractal percolation models. As noted before, the $k$-model and MFP model can be obtained from the GFP model with generator $Y$ by setting $Y \equiv k$, resp.\ $Y$ binomially distributed with parameters $N^d$ and $p \in [0,1]$. Therefore, we only provide a formal probabilistic definition of the GFP model and the fat fractal percolation model. Define the index set $\JJ := \bigcup_{n=0}^{\infty} J^{d,n}$. We define a family of random variables $\{Z_{\text{model}}(\Ib)\}$, where $\Ib \in \JJ$ and -- here as well as in the rest of the section -- ``$\text{model}$'' stands for either $p,\text{fat}$, $k$ or $Y$.
\begin{itemize}
 \item[1.] GFP model with generator $Y$: For every $\Ib \in \JJ$, let $y(\Ib)$ denote a realization of $Y$, independently of other $\Ib'$. We define $J(\Ib)$ as a uniform choice of $y(\Ib)$ different indices of $J^d$, independently of other $J(\Ib')$. For $\mathbf{j} \in J^d$ define
\[
 Z_Y(\Ib,\mathbf{j}) = \left\{
\begin{array}{ll}
1, &  \mathbf{j} \in J(\Ib), \\
0, & \text{otherwise.}
\end{array}
\right.
\]
\item[2.] Fat fractal percolation with parameters $(p_{n})_{n\geq 1}$: For every $\Ib \in \JJ$ and $\mathbf{j} \in J^d$, let $n = |\Ib|$ and define
\[
 Z_{\text{fat}}(\Ib,\mathbf{j}) = \left\{
\begin{array}{ll}
1, &  \text{with probability } p_{n+1}, \\
0, & \text{with probability } 1-p_{n+1},
\end{array}
\right.
\]
independently of all other $Z_{\text{fat}}(\Ib')$.

\end{itemize}

For each $\Ib \in \JJ$ we define the indicator function $1_{\text{model}}(\Ib)$ by
\[
1_{\text{model}}(\emptyset)=1,\quad 1_{\text{model}}(\Ib)=Z_{\text{model}}(\Ii_1) Z_{\text{model}}(\Ii_1,\Ii_2) \cdots Z_{\text{model}}(\Ib),
\]
where $\Ib=(\Ii_1,\Ii_2,\ldots,\Ii_n) \in J^{d,n}$. We retain the subcube $C(\Ib)$ if $1_{\text{model}}(\Ib)=1$ and we write $D_{\text{model}}^n$ for the set of retained level-$n$ cubes. Note that $D_{\text{model}}^1, D_{\text{model}}^2, \ldots$ correspond to the sets informally constructed in the introduction. We denote by $\IP_{\text{model}}$ the distribution of the corresponding model on $\Omega = \{0,1\}^{\CC}$, where $\CC:=\{C(\Ib): \Ib \in \JJ\}$ denotes the collection of all subcubes, endowed with the usual sigma algebra generated by the cylinder events. To simplify the notation, we will drop the subscripts $\text{fat},k,p,Y$ when there is no danger of confusion.

\section[Proofs of the k-fractal results]{Proofs of the $k$-fractal results}\label{sec proof k fractal}

In this section we prove Theorem \ref{thm critical k} and Theorem \ref{thm generalization}. The proof of Theorem \ref{thm critical k} is divided in two parts. First we treat the subcritical case and show that $\liminf_{N \to \infty} k_c(N,d)/N^d \geq \pc$.

\begin{theorem}\label{thm subcritical}
Consider the $k$-model. We have
\[
\liminf_{N \to \infty} k_c(N,d)/N^d \geq \pc.
\]                                           
\end{theorem}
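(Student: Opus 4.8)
The plan is to couple the $k$-model with a Mandelbrot fractal percolation model whose parameter $p$ is slightly below $\pc$, and then invoke $p_c(N,d)\to\pc$ (the Falconer--Grimmett result) to conclude. Fix $\varepsilon>0$ and suppose, for contradiction, that along some sequence $N\to\infty$ we have $k=k(N)\le (\pc-\varepsilon)N^d$ with $\theta(k,N,d)>0$. The key observation is a stochastic-domination statement at a single level: if $k\le pN^d$, then retaining exactly $k$ cubes uniformly out of $N^d$ is stochastically dominated (as a subset of the $N^d$ cubes) by retaining each cube independently with probability $p$. This is the standard fact that a uniform $k$-subset is dominated by $\mathrm{Bin}(N^d,p)$-percolation when $k\le pN^d$; one can realize it, e.g., by conditioning the independent model on the number of retained cubes and coupling monotonically. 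Because the fractal construction proceeds independently and identically across cubes and scales, this single-level domination lifts to a domination of the whole random set: $D_k \preceq D_p$ in the sense that there is a coupling with $D_k\subseteq D_p$ a.s., hence $\theta(k,N,d)\le\sigma(p,N,d)$.

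Carrying this out in order: (1) state and prove the single-level domination lemma for uniform $k$-subsets versus Bernoulli$(p)$ percolation when $k\le pN^d$; (2) upgrade it to a coupling of the full processes $D_k\subseteq D_p$ by applying the single-level coupling independently in every retained cube at every scale — this is routine given the formal construction in Section \ref{sec pre}, using the tree structure of $\JJ$; (3) deduce $\theta(k,N,d)\le\sigma(p,N,d)$ with $p=\pc-\varepsilon$; (4) since $p<\pc$ and $p_c(N,d)\to\pc$, for all $N$ large enough we have $p<p_c(N,d)$, so $\sigma(p,N,d)=0$, forcing $\theta(k,N,d)=0$ — contradiction. Therefore $k_c(N,d)>(\pc-\varepsilon)N^d$ for all large $N$, i.e.\ $\liminf_N k_c(N,d)/N^d\ge\pc-\varepsilon$, and letting $\varepsilon\downarrow 0$ gives the claim.

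I expect the main (though still quite manageable) obstacle to be the bookkeeping in step (2): one must be careful that the coupling of the $k$-model inside each retained cube can be performed independently and consistently so that the inclusion $D_k^n\subseteq D_p^n$ propagates to all levels $n$, and hence to the limit sets. Concretely, one builds, for each index $\Ib\in\JJ$, a coupling of $J(\Ib)$ (the uniform $k$-subset of $J^d$) with the Bernoulli field $\{Z_p(\Ib,\mathbf j)\}_{\mathbf j\in J^d}$ such that $\{\mathbf j: \mathbf j\in J(\Ib)\}\subseteq\{\mathbf j: Z_p(\Ib,\mathbf j)=1\}$, independently over $\Ib$; then $1_k(\Ib)\le 1_p(\Ib)$ for every $\Ib$ by induction on $|\Ib|$, whence $D_k\subseteq D_p$ and in particular any crossing of the unit cube by $D_k$ is a crossing by $D_p$. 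Everything else is an immediate consequence of monotonicity of crossing probabilities in the retained set together with the cited convergence $p_c(N,d)\to\pc$.
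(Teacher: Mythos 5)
Your proof hinges on a claimed single-level stochastic domination which is in fact false, and this is exactly the obstruction that the paper's own proof identifies and works around. You assert that when $k\le pN^d$, a uniform $k$-subset of the $N^d$ subcubes is stochastically dominated (in the containment ordering) by independent retention with probability $p$. But the uniform $k$-subset has \emph{exactly} $k$ retained cubes with probability one, whereas the Bernoulli($p$) field has a $\mathrm{Bin}(N^d,p)$ number of retained cubes, which is strictly less than $k$ with probability at least $(1-p)^{N^d}>0$ whenever $k\ge 1$. Concretely, the event ``at least $k$ cubes are retained'' is increasing, has probability $1$ under the $k$-model and probability strictly less than $1$ under the MFP model, so no coupling with $D_k^1\subseteq D_p^1$ can exist. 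Consequently steps (1)--(3) of your outline collapse: there is no direct coupling $D_k\subseteq D_p$, and $\theta(k,N,d)\le\sigma(p,N,d)$ does not follow.

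What the paper does instead is precisely a remedy for this failure. Rather than dominating $D_k$ by the unconditional MFP limit set, it dominates the law of $D_k$ by the \emph{conditional} law of $D_{p_0}$, conditioned on the event $E$ that there exists an infinite tree of retained subcubes in which every cube has at least $k(N)$ retained children in the tree (the ``$\infty$-good'' event). Conditioning on $E$ guarantees that at every node there are always at least $k(N)$ retained subcubes available, so one can sample $D_{p_0}$ conditioned on $E$ and then \emph{thin} to exactly $k(N)$ subcubes per node uniformly, producing a set with the law of $D_k$ inside it. One then needs $\IP_{p_0}(E)>0$ for $N$ large (Lemma~\ref{lem high prob}, via a Chebyshev/induction argument), after which the crossing probability under the conditional law is still $0$ because $p_0<\pc\le p_c(N,d)$. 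Your steps (4) and the final limiting argument are fine as written; the missing ingredient is replacing the nonexistent unconditional domination by the conditional one, together with the positivity of $\IP_{p_0}(E)$.
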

\noindent
In the supercritical case, we prove that the crossing probability converges to 1 as $N \to \infty$. Again, for future reference we state this as a theorem.

\begin{theorem}\label{thm supercritical}
Let $p>\pc$ and let $(k(N))_{N\geq 2}$ be a sequence of integers such that $k(N) / N^d \geq p$, for all $N \geq 2$. We have
\[
 \lim_{N \to \infty} \theta(k(N),N,d) = 1.
\]
\end{theorem}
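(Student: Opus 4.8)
The plan is to reduce the statement to the supercriticality of site percolation on $\IL^d$ together with a renormalization argument, following the strategy by which Falconer and Grimmett establish \eqref{prob to 1} for the MFP model. Two features are special to the $k$-model and must be dealt with: the $k$ retained subcubes of a given parent are not chosen independently, and --- as always in fractal percolation --- it is the limit set $D_k$, not one of its finite approximations $D_k^n$, that we need to cross $[0,1]^d$, so a lower bound on $\theta$ cannot be read off from a finite stage of the construction. Throughout we fix $\pc < p' < p$.

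\emph{A comparison lemma.} The lack of independence is handled by comparing a uniform choice of $k$ out of $N^d$ children with independent retention at density $p'$. Attach i.i.d.\ $\mathrm{Unif}[0,1]$ labels to the $N^d$ children of a cube and have the $k$-model retain the $k$ children carrying the smallest labels; then every child with label $<p'$ is retained unless $\Bin(N^d,p')$ exceeds $k \ge p N^d$, an event of probability at most $e^{-c N^d}$ for some $c=c(p,p')>0$, by a standard large-deviation bound. Thus, off an event of exponentially small probability, the set retained at a single splitting \emph{contains} an independent $p'$-percolation configuration on the children; more precisely, its restriction to any window of at most $s$ children stochastically dominates $p'$-product measure as soon as $(k-s)/N^d\ge p'$, since when the $s$-sample is drawn without replacement each draw is a success with conditional probability at least $(k-s)/N^d$. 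Because inside each retained cube the $k$-model restarts as an independent rescaled copy of itself, this comparison can be threaded through the whole construction; the only cost is an extra ``defect'' probability of at most $e^{-cN^d}$ attached to each splitting that the renormalization inspects.

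\emph{Percolation input and renormalization.} Since $p'>\pc$, the Grimmett--Marstrand slab theorem yields, for every $\varepsilon>0$, an integer $L$ and an increasing ``good block'' event $G_L$ for $p'$-site percolation on the cube $[0,L]^d$ of $\IL^d$, with $\IP_{p'}(G_L)\ge 1-\varepsilon$, producing a crossing cluster that moves through $(d-1)$-dimensional faces only and such that $G_L$-blocks sharing a face link their crossings into a single cluster. The Falconer--Grimmett argument promotes $G_L$ to a \emph{self-reproducing} crossing event at all scales of the fractal construction: inside each retained cube of a crossing cluster one again finds, with probability close to $1$, a rescaled crossing joining the incoming face to the outgoing face, so that the crossings persist to the limit set. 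Carrying this out for the $k$-model, the comparison lemma allows every splitting encountered along the (randomly routed) crossing cluster to be treated as a genuinely supercritical, independent percolation step; the additional defect probability $e^{-cN^d}$ per splitting is far below the tolerance $\varepsilon$ already built into the renormalization, so it is harmless. Letting first $N\to\infty$, so that the renormalized, strongly supercritical percolation across the $\approx (N/L)^d$ blocks tiling $[0,1]^d$ crosses with probability tending to $1$, and then $\varepsilon\to 0$, gives $\theta(k(N),N,d)\to 1$.

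\emph{The main obstacle.} The delicate step is the construction of the self-reproducing good event: one must choose $G_L$ and a suitable ``look-ahead'' depth in the recursion so that the probability that a fresh rescaled copy again exhibits the required face-to-face crossing stays bounded away from $0$ --- in fact can be made close to $1$ --- uniformly over all scales, which is exactly what lets the limit set, rather than a finite approximant, carry a crossing. This is the technical heart of the Falconer--Grimmett proof of \eqref{prob to 1}; for the $k$-model the nature of this difficulty is unchanged, and the only genuinely new ingredient is the comparison lemma above, which turns the correlated $k$-model into ordinary supercritical percolation on bounded windows with exponentially small error.
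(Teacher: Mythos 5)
Your strategy --- compare a single split of the $k$-model to independent $p'$-site percolation and thread this through a Falconer--Grimmett renormalization --- is the same route the paper takes, and the labeling coupling (keep the $k$ smallest of $N^d$ i.i.d.\ uniform labels, versus keep all labels $<p'$, with failure only if $\Bin(N^d,p')>k$) is a valid and in some ways cleaner substitute for the coupling the paper actually constructs. But as written the argument has real gaps. The windowed domination you offer ``more precisely'' is not usable here: the percolation input (Lemma~\ref{lem FalGri}) operates on the full $N^d$-vertex box $B_N$, and the condition $s\le(k-p'N^d)$ means the window can only be a vanishing fraction of $B_N$; only the labeling version works at full scale. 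More seriously, the claim that the $e^{-cN^d}$ defect is ``far below the tolerance $\varepsilon$ \dots\ so it is harmless'' is asserted rather than proved, and the obvious reading (union bound over splittings) fails: there are of order $k^m$ splittings at level $m$ and $\sum_m k^m e^{-cN^d}=\infty$, so the coupling breaks somewhere a.s.\ at a fixed $N$. The defect has to be absorbed into the per-cube \emph{conditional} probability of being good, maintained uniformly over the depth $n$ by a recursion; this is exactly the content of the paper's $(n,u)$-good machinery and of Lemma~\ref{lem prob}, and it is the part your proposal waves at (``this comparison can be threaded through'') without setting up.

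There is also a step missing between the comparison lemma and the percolation lemma: what must dominate a Bernoulli-$\pi$ configuration is not the set of retained subcubes but the set of retained \emph{and} $(n,u)$-good subcubes, and the goodness events only carry conditional probability $\ge 1-\epsilon$, not $1$. The paper handles this by the intermediate coupling $G_1\supset G_2\supset G_3$: $G_2$ is the uniform $k$-subset thinned independently by $1-\epsilon$ (absorbing the goodness loss, via sequential coupling and the inductive bound on subcubes), and then $G_2$ is shown to contain a Bernoulli-$\pi$ set $G_3$ with probability $\ge 1-\epsilon/2$. Your proposal compares retention alone and never inserts the $1-\epsilon$ thinning. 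None of these objections says your approach is wrong in spirit --- the labeling coupling could replace the $G_2\supset G_3$ construction and in fact gives a better (exponentially small) error --- but the recursive framework that makes ``per level'' actually mean ``conditionally, given the filtration $\FF(\Ib^-)$'' is the heart of the proof and cannot be left to a citation, because that is precisely where the $k$-model's lack of independence has to be tamed.
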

\noindent
Theorem \ref{thm critical k} follows immediately from these two theorems.

We prove Theorems \ref{thm subcritical} and \ref{thm supercritical} in Sections \ref{sec subcritical} and \ref{sec supercritical}, respectively. In Section \ref{sec generalization} we prove Theorem \ref{thm generalization}, using the idea of the proof of Theorem \ref{thm subcritical} and the result of Theorem \ref{thm supercritical}.

\subsection{Proof of Theorem \ref{thm subcritical}}\label{sec subcritical}

Let $p<\pc$ and consider a sequence $(k(N))_{N \geq 2}$ such that $k(N)/N^{d} \leq p$, for all $N \geq 2$, and $k(N)/N^d \to p$ as $N \to \infty$. Our goal is to show that the probability that the unit cube is crossed by $D_{k(N)}$, is equal to zero for all $N$ large enough. Let $N \geq 2$ and let $D_{p_{0}}$ be the limit set of an MFP process with parameters $p_0$ and $N$, where $p<p_0<\pc$. First, part (a) of Theorem 2 in \cite{FalGri} states that 
\begin{equation}\label{pc subcritical}
\pc \leq p_c(N,d),
\end{equation}
 for all $N$. Hence, the MFP process with parameter $p_0<\pc$ is subcritical. Therefore, a natural approach to prove that the probability that $D_{k(N)}$ crosses the unit cube equals zero for $N$ large enough would be to couple the limit set $D_{k(N)}$ to the limit set $D_{p_{0}}$ in such a way that $D_{k(N)} \subset D_{p_{0}}$. However, a ``direct" coupling between the limit sets $D_{k(N)}$ and $D_{p_{0}}$ is not possible, since with fixed positive probability at each iteration of the MFP process the number of retained subcubes is less than $k(N)$. We therefore need to find a more refined coupling.

The following is an informal strategy of the proof. We will define an event $E$ on which the MFP process contains an infinite tree of retained subcubes, such that each subcube in this tree contains at least $k(N)$ retained subcubes in the tree. Next, we perform a construction of two auxiliary random subsets of the unit cube, from which it will follow that the law of $D_{k(N)}$ is stochastically dominated by the conditional law of $D_{p_0}$, conditioned on the event $E$. In particular, the probability that $D_{k(N)}$ crosses $[0,1]^d$ is less than or equal to the conditional probability that $D_{p_{0}}$ crosses the unit cube, given $E$. The latter probability is zero for $N$ large enough, since the event $E$ has positive probability for $N$ large enough and the MFP process is subcritical.

Let us start by defining the event $E$. Consider an MFP process with parameters $p_0$ and $N$. For notational convenience we call the unit cube the level-0 cube. A level-$n$ cube, $n \geq 0$, is declared \emph{0-good} if it is retained and contains at least $k(N)$ retained level-$(n+1)$ subcubes. (We adopt the convention that $[0,1]^d$ is automatically retained.)
Recursively, we define the notion \emph{$m$-good}, for $m\geq 0$. A level-$n$ cube, for $n\geq 0$, is $(m+1)$-good if it is retained and contains at least $k(N)$ $m$-good subcubes. We say that the unit cube is \emph{$\infty$-good} if it is $m$-good for every $m\geq0$. Define the following events
\begin{eqnarray}
 E_m := \{ [0,1]^d  \text{ is } m\text{-good}\}, \label{E_m} \nonumber \\
 E:= \{[0,1]^d \text{ is } \infty\text{-good}\}. \label{E}
\end{eqnarray}

The following lemma states that we can make the probability of $E$ arbitrary close to 1, for $N$ large enough. In particular, $E$ has positive probability for large enough $N$, which will be sufficient for the proof of Theorem \ref{thm subcritical}.

\begin{lemma}\label{lem high prob}
 Let $p_0 < \pc$. Let $(k(N))_{N\geq 2}$ be a sequence of integers satisfying $\limsup_{N \to \infty} k(N)/N^d < p_0$. Consider an MFP model with parameters $p_0$ and $N$. For all $\epsilon>0$ there exists $N_0$ such that $\IP_{p_{0}}(E) > 1-\epsilon$ for all $N \geq N_0$.
\end{lemma}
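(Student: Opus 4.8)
The plan is to encode the event $E$ in a one-dimensional recursion and to show that, because $k(N)/N^d$ stays bounded away from $p_0$, this recursion remains close to $p_0$ uniformly in the number of iterations; applying a large-deviation bound then makes each $\IP_{p_0}(E_m)$, and hence $\IP_{p_0}(E)$, close to $1$.

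First I would record the elementary monotonicity fact that, for every cube of the process and every $m\ge 0$, being $(m+1)$-good implies being $m$-good; this is proved by induction on $m$, the base case using only that a $0$-good cube is retained and that a $0$-good child is in particular a retained child. Hence $E_{m+1}\subseteq E_m$, so that $\IP_{p_0}(E)=\lim_{m\to\infty}\IP_{p_0}(E_m)$, and it suffices to bound $\IP_{p_0}(E_m)$ from below uniformly in $m$ (a plain union bound over all $m$ would diverge, so this monotonicity is genuinely needed). Next, let $r_m=r_m(N)$ be the probability that a fixed level-$1$ subcube is $m$-good; this event in particular forces the subcube to be retained, and by self-similarity $r_m$ is also the probability that any given level-$n$ subcube with $n\ge1$ is $m$-good. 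Since the $N^d$ children of a retained cube are retained independently of one another and the subtrees rooted at distinct children are independent copies of the process, the number of $m$-good children of a retained cube has law $\Bin(N^d,r_m)$ and is independent of the status of that cube. Consequently
\[
 r_{m+1}=p_0\,\IP\bigl(\Bin(N^d,r_m)\ge k(N)\bigr),\qquad r_0=p_0\,\IP\bigl(\Bin(N^d,p_0)\ge k(N)\bigr),
\]
and, because the unit cube is automatically retained, $\IP_{p_0}(E_m)=r_m/p_0$ for every $m\ge0$.

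Now fix $\delta>0$ such that $k(N)/N^d\le p_0-\delta$ for all sufficiently large $N$, which is possible by the hypothesis $\limsup_N k(N)/N^d<p_0$. The heart of the argument is the claim that, once $N$ is large enough, $r_m\ge p_0-\delta/2$ for all $m\ge0$; equivalently, writing $f(x):=p_0\,\IP(\Bin(N^d,x)\ge k(N))$, the interval $[\,p_0-\delta/2,\,p_0\,]$ is forward-invariant under $f$ and contains $r_0=f(p_0)$. Indeed, if $r_m\ge p_0-\delta/2$ then $r_m-k(N)/N^d\ge\delta/2$, so a standard large-deviation (Hoeffding) bound for the binomial gives $\IP\bigl(\Bin(N^d,r_m)\ge k(N)\bigr)\ge 1-e^{-\delta^2 N^d/2}$, whence $r_{m+1}\ge p_0\bigl(1-e^{-\delta^2 N^d/2}\bigr)\ge p_0-\delta/2$ provided $N$ is large enough that $p_0 e^{-\delta^2 N^d/2}\le\delta/2$; the base case $r_0\ge p_0-\delta/2$ is the identical estimate with $p_0$ (which is $\ge k(N)/N^d+\delta$) in the role of $r_m$. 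The same bound yields $\IP_{p_0}(E_m)=r_m/p_0\ge 1-e^{-\delta^2 N^d/2}$ for every $m$, hence $\IP_{p_0}(E)\ge 1-e^{-\delta^2 N^d/2}$, and given $\epsilon>0$ one takes $N_0$ large enough that the finitely many ``$N$ sufficiently large'' requirements above hold and $e^{-\delta^2 N_0^d/2}<\epsilon$.

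I do not expect a serious obstacle. The only point needing care is the uniformity in $m$, which is why one phrases the construction as the iteration of $f$ and exhibits a forward-invariant trapping interval near $p_0$; the fixed gap $\delta$ between $k(N)/N^d$ and $p_0$, together with binomial concentration, is exactly what provides it. (Incidentally, the hypothesis $p_0<\pc$ plays no role in this lemma; it is used only afterwards, when $E$ is invoked to force the crossing probability to vanish.)
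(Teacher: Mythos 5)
Your proof is correct and follows essentially the same route as the paper: both establish $E_{m+1}\subseteq E_m$, set up the recursion $\IP_{p_0}(E_{m+1})=\IP\bigl(\Bin(N^d,p_0\,\IP_{p_0}(E_m))\ge k(N)\bigr)$, and prove by induction a lower bound uniform in $m$ using the fixed gap between $k(N)/N^d$ and $p_0$. The only difference is cosmetic: you use a Hoeffding bound (and phrase the induction as a forward-invariant interval for the map $f$), whereas the paper uses Chebyshev; your side remark that $p_0<p_c(d)$ is not used in the lemma itself is also accurate.
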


\begin{proof} Let $\delta>0$ and $N_0$ be such that $k(N)/N^d \leq p_0-2\delta =:p$ for all $N \geq N_0$. Choose $N_1 \geq N_0$ so large that $p_0/(4\delta^2 N^d) < \delta $ for $N \geq N_1$. We will show that
\begin{equation}\label{induction E}
\IP_{p_{0}}(E_m) \geq 1-\frac{1}{4 \delta^2 N^{d}} ,
\end{equation}
for all $m\geq 0$ and $N\geq N_1$. Since $E_m$ decreases to $E$ as $m\to\infty$, it follows that 
\[
 \IP_{p_{0}}(E) = \lim_{m \to \infty} \IP_{p_{0}}(E_m) \geq 1-\frac{1}{4 \delta^2 N^{d}},
\]
for $N \geq N_1$. Now take $N_2\geq N_1$ so large that $1- \frac{1}{4 \delta^2 N^d} > 1- \epsilon$ for all $N \geq N_2$. It remains to show \eqref{induction E}.

We prove \eqref{induction E} by induction on $m$. Consider the event $E_0$, i.e.\ the event that the unit cube contains at least $k(N)$ retained level-1 subcubes. Let $X(n,p)$ denote a binomially distributed random variable with parameters $n\in \IN$ and $p\in [0,1]$. Since the number of retained level-1 cubes has a binomial distribution with parameters $N^{d}$ and $p_0$, it follows from Chebyshev's inequality that, for every $N \geq N_1$, we have (writing $\IP$ for the probability measure governing the binomially distributed random variables)
\begin{eqnarray*}
 \IP_{p_{0}}(E_0) &=& \IP(X(N^d,p_0) \geq k(N))  \\
 &\geq& \IP(X(N^d,p_0) \geq p N^d) \\
&\geq&  1- \frac{\Var X(N^d,p_0)}{4\delta^2 N^{2d}} \\
&=& 1 -\frac{p_0(1-p_0)N^d}{4 \delta^2 N^{2d}} \\
&\geq& 1- \frac{1}{4 \delta^2 N^{d}}.
\end{eqnarray*}

Next, let $m\geq 0$ and $N \geq N_1$ and suppose that \eqref{induction E} holds for this $m$ and $N$. Recall that $E_{m+1}$ is the event that the unit cube contains at least $k(N)$ $m$-good level-1 cubes. The probability that a level-1 cube is $m$-good, given that it is retained, is equal to $\IP_{p_0}(E_m)$. Using the induction hypothesis, we get
\begin{eqnarray*}
 \IP_{p_{0}}(E_{m+1}) &=& \IP(X(N^{d},p_{0}\IP_{p_{0}}(E_m)) \geq k(N)) \\
& \geq& \IP(X(N^d,p_0(1-\textstyle{\frac{1}{4 \delta^2 N^{d}}})) \geq k(N)). 
\end{eqnarray*}
By our choices for $\delta$ and $N$ it follows that $p_0(1-\frac{1}{4 \delta^2 N^{d}}) > p+\delta$. Hence, using Chebyshev's inequality, we get
\begin{eqnarray*}
\IP(X(N^d,p_0(1-\textstyle{\frac{1}{4 \delta^2 N^{d}}})) \geq k(N))
&\geq&\IP(X(N^d, p+\delta) \geq k(N)) \\
&\geq& \IP(X(N^d, p+ \delta) \geq pN^d) \\
&\geq& 1- \frac{\Var X(N^d,p+\delta)}{\delta^2 N^{2d}} \\
 &\geq& 1-  \frac{1}{4\delta^2N^{d}}.
\end{eqnarray*}
Therefore, the induction step is valid and we have proved \eqref{induction E}.
\end{proof}

\begin{proof}[Proof of Theorem \ref{thm subcritical}]
Let $p,p_0$ be such that $p < p_0 < \pc$. Let $(k(N))_{N \geq 2}$ be a sequence such that $k(N)/N^d \leq p$, for all $N \geq 2$, and $k(N)/N^d \to p$ as $N \to \infty$. Consider an MFP model with parameters $p_0$ and $N$ and define the event $E$ as in \eqref{E}. Henceforth, we assume that $N$ is so large that $\IP_{p_0}(E)>0$, which is possible by Lemma \ref{lem high prob}. In order to prove Theorem \ref{thm subcritical} we will use $E$ to construct two random subsets, $\tilde{D}_{p_{0}}$ and $\tilde{D}_{k(N)}$, of the unit cube, on a common probability space and with the following properties: 
\begin{itemize}
 \item[(i)] $\tilde{D}_{k(N)} \subset \tilde{D}_{p_{0}}$;
\item[(ii)] the law of $\tilde{D}_{p_{0}}$ is stochastically dominated by the conditional law of $D_{p_{0}}$, conditioned on the event $E$;
\item[(iii)] the law of $\tilde D_{k(N)}$ is the same as the law of $D_{k(N)}$.
\end{itemize}

It follows that the law of $D_{k(N)}$ is stochastically dominated by the conditional law of $D_{p_0}$, 
conditioned on the event $E$. Hence, the probability that the unit cube is crossed by $D_{k(N)}$ is at most the conditional probability that $D_{p_0}$ crosses the unit cube, conditioned on the event $E$. By \eqref{pc subcritical} the MFP process with parameter $p_0$ is subcritical, thus the latter probability equals zero. Using the fact that $k(N)/N^d \to p$ as $N \to \infty$, we conclude that
\[
 \liminf_{N \to \infty} \frac{k_{c}(N,d)}{N^{d}} \geq p.
\]
Since $p<\pc$ was arbitrary, we get
\[
 \liminf_{N \to \infty} \frac{k_{c}(N,d)}{N^{d}} \geq \pc.
\]

It remains to construct random sets $\tilde{D}_{p_{0}},\tilde{D}_{k(N)}$ with the properties (i)-(iii). First we construct two sequences $(\tilde{D}_{p_{0}}^n)_{n\geq 1}, (\tilde{D}_{k(N)}^n)_{n\geq 1}$ of decreasing random subsets.
Let ${\cal L}$ be the conditional law of the number of $\infty$-good level-1 cubes of the MFP process, conditioned on the event $E$. Note that the support of ${\cal L}$ is $\{k(N),k(N)+1,\ldots,N^d\}$. Furthermore, for a fixed level-$n$ cube $C(\Ib)$, ${\cal L}$ is also equal to the conditional law of the number of $\infty$-good level-$(n+1)$ subcubes in $C(\Ib)$, conditioned on $C(\Ib)$ being $\infty$-good. 

Choose an integer $l$ according to ${\cal L}$ and choose $l$ level-1 cubes uniformly. Define $\tilde D_{p_0}^1$ as the closure of the union of these $l$ level-1 cubes. Choose $k(N)$ out of these $l$ cubes in a uniform way and define $\tilde D_{k(N)}^1$ as the closure of the union of these $k(N)$ cubes.  For each level-1 cube $C({\bf I}) \subset \tilde D_{p_0}^1$, pick an integer $l({\bf I})$ according to ${\cal L}$, independently of other cubes, and choose $l({\bf I})$ level-2 subcubes of $C({\bf I})$ in a uniform way. Define $\tilde D_{p_0}^2$ as the closure of the union of all selected level-2 cubes. For each level-1 cube $C({\bf I}) \subset \tilde D_{k(N)}^1$, uniformly choose $k(N)$ out of the $l({\bf I})$ selected level-2 subcubes. Define $\tilde D_{k(N)}^2$ as the closure of the union of the $k(N)^2$ selected level-2 cubes of $C(\Ib)$. Iterating this procedure yields two infinite decreasing sequences of random subsets $(\tilde{D}_{p_{0}}^n)_{n\geq 1}, (\tilde{D}_{k(N)}^n)_{n\geq 1}$.

Now define 
\[
 \tilde D_{p_0} := \bigcap_{n=1}^{\infty} \tilde D_{p_0}^n,\quad \tilde D_{k(N)} := \bigcap_{n=1}^{\infty} \tilde D_{k(N)}^n.
\]
By construction, for each $n\geq 1$, we have that (1) $\tilde D_{k(N)}^n \subset \tilde D_{p_0}^n$, (2) the law of $\tilde{D}_{p_{0}}^n$ is stochastically dominated by the conditional law of $D_{p_{0}}^n$ given $E$ and (3) the law of $\tilde D_{k(N)}^n$ is equal to the law of $D_{k(N)}^n$. It follows that the limit sets $\tilde{D}_{p_{0}},\tilde{D}_{k(N)}$ satisfy properties (i)-(iii).
\end{proof}

\subsection{Proof of Theorem \ref{thm supercritical}}\label{sec supercritical}

Let us start by outlining the proof. The first part consists mainly of setting up the framework, where we use the notation of Falconer and Grimmett \cite{FalGri}, which will enable us in the second part to prove that the subcubes of the fractal process satisfy certain ``good'' properties with probability arbitrarily close to 1 as $N\to \infty$. Informally, a subcube is good when there exist many connections inside the cube between its faces and when it is also connected to other good subcubes. Therefore, the probability of crossing the unit cube converges to 1 as $N\to\infty$.

Although we will partly follow \cite{FalGri}, it does not seem possible to use Theorem 2.2 of \cite{FalGri} directly. 
First, we state (a slightly adapted version of) Lemma 2 of \cite{FalGri}, which concerns site percolation with parameter $\pi$ on $\IL^d$. We let every vertex of $\IL^d$ be colored \emph{black} with probability $\pi$ and \emph{white} otherwise, independently of other vertices. We write $P_\pi$ for the ensuing product measure with density $\pi \in [0,1]$. We call a subset $C$ of $\IL^{d}$ a \emph{black cluster} if it is a maximal connected subset (with respect to the adjacency relation on $\IL^d$) of black vertices. Denote the cube with vertex set  $\{1,2,\ldots,N\}^d$ by $B_N$. Let $\LL$ be the set of edges of the unit cube $[0,1]^d$, that is $\LL$ contains all sets of the form
\[
 L_{r}(\mathbf{a})=\{a_1\} \times \{a_2\} \times \cdots \times \{a_{r-1}\} \times [0,1] \times \{a_{r+1}\} \times \cdots \times \{a_d\}
\]
as $r$ ranges over $\{1,\ldots,d\}$ and $\mathbf{a} = (a_1,a_2,\ldots,a_d)$ ranges over $\{0,1\}^d$. For each $L=L_{r}(\mathbf{a}) \in \LL$ we write 
\[
L_N=\{ \mathbf{x} \in B_N: x_{i} = \max\{1,a_{i}N\} \text{ for } 1 \leq i \leq d, i \neq r \}
\]
for the corresponding edge of $B_N$.

\begin{lemma}\label{lem FalGri}
 Suppose $\pi>\pc, \epsilon>0$ and let $q$ be a positive integer. There exist positive integers $u$ and $N_1$ such that the following holds for all $N \geq N_1$. Let $U(1),\ldots,U(q)$ be subsets of vertices of $B_N$ such that for each $r\in\{1,\ldots,q\}$, (i) $|U(r)| \geq u$ and (ii) there exists $L \in \LL$ such that $U(r) \subset L_N$. Then, 
\begin{equation}\label{perc bound}
 P_{\pi}\left(\begin{array}{c}
      \text{there exists a black cluster } C_N \text{ such that }|C_N \cap L_N| \geq u \\
      \text{ for all } L \in \LL, \text{ and } | C_N \cap U(r)| \geq 1, \text{ for all } r\in\{1,\ldots,q\} \end{array}\right) \geq 1- \frac{\epsilon}{2}.
\end{equation}
\end{lemma}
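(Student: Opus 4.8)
The plan is to deduce Lemma~\ref{lem FalGri} from the standard supercritical behavior of site percolation on $\IL^d$, essentially reorganizing the statement so that we can invoke a uniqueness-of-the-large-cluster type argument on a box. Since $\pi > \pc$, site percolation on $\IL^d$ is supercritical, so there is a unique infinite black cluster, and in a large box $B_N$ the largest black cluster $C_N$ has, with probability tending to $1$, size of order $N^d$ and, more precisely, intersects every fixed ``face region'' in a number of vertices tending to infinity. Concretely, I would first fix $\pi>\pc$ and, using a block/renormalization argument (or directly citing the relevant part of \cite{FalGri}, namely their Lemma~2), establish: for every $m$ there is $N_1'$ such that for all $N\geq N_1'$, with probability at least $1-\epsilon/4$ there is a black cluster $C_N$ in $B_N$ with $|C_N\cap L_N|\geq m$ for every $L\in\LL$. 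The point is that each $L_N$ is a $(d-1)$-dimensional sub-box of $B_N$ of side length $N$, living on a face of $B_N$, and supercriticality forces the crossing cluster to touch each face in a linearly growing number of sites; one controls all $2d$ faces simultaneously by a union bound over the constantly many faces.

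The second ingredient is to upgrade ``$|C_N\cap L_N|\geq m$'' to ``$C_N$ meets each prescribed set $U(r)$,'' where $U(r)\subset L_N$ for some $L\in\LL$ and $|U(r)|\geq u$. Here I would argue as follows: choose $u$ large depending on $\pi$, $\epsilon$, $q$, and on the constant $m$ above. Given the event from the first step, the big cluster $C_N$ already occupies at least a $\rho$-fraction (for some $\rho=\rho(\pi)>0$, not depending on $N$) of the vertices of each $L_N$ with probability $\geq 1-\epsilon/4$ — this is again a consequence of supercriticality, now quantified on the $(d-1)$-dimensional slab $L_N$: the density of $C_N$ inside $L_N$ concentrates around a positive constant. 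If $C_N$ occupies at least a $\rho$-fraction of $L_N$ but misses some $U(r)\subset L_N$ with $|U(r)|\geq u$, that is a deviation event whose probability I would bound. The cleanest way: by symmetry and a second-moment or FKG argument, $P_\pi(C_N\cap U(r)=\emptyset)$ decays in $|U(r)|$, uniformly in $N$ and in the position of $U(r)$ within $L_N$ — for instance because $C_N$ restricted to a neighborhood of $L_N$ stochastically dominates the cluster of a supercritical percolation on a $(d-1)$-dimensional half-space-like region, which has positive density, so avoiding $u$ specified sites costs at least $c^u$ for some $c<1$. Then take $u$ so large that $q\cdot c^{u} < \epsilon/4$ and union bound over $r\in\{1,\ldots,q\}$.

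Assembling: choose $m$ so that the first step gives probability $\geq 1-\epsilon/4$; then $\rho$ from supercriticality of $C_N$ inside the slabs (another $\epsilon/4$); then $u\geq m$ large enough that the density-deviation and the $U(r)$-avoidance both cost less than $\epsilon/4$ each after the union bounds over the $2d$ faces and the $q$ sets; finally let $N_1\geq N_1'$ absorb all finitely many threshold conditions. Intersecting the three or four good events gives \eqref{perc bound} with slack $\epsilon/2$ as stated (the $\epsilon/2$ rather than $\epsilon$ is presumably because the other half of $\epsilon$ is spent elsewhere in the application of the lemma). The main obstacle, and the step I expect to require the most care, is the quantitative ``positive density, uniformly in $N$ and in location'' statement for the crossing cluster $C_N$ inside the lower-dimensional boundary slabs $L_N$: one must make sure the constant $\rho$ and the avoidance rate $c$ do not degrade as the slab sits against the boundary of $B_N$ or as $N\to\infty$. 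This is handled by the usual device of comparing with percolation on a slightly fattened infinite slab $\IZ^{d-1}\times\{1,\ldots,k\}$ for fixed width $k$, which is itself supercritical for $\pi>\pc$ once $k$ is large (here invoking that the slab critical values converge to $\pc$), together with the standard fact that a supercritical cluster has an a.s.\ positive density and exponentially small holes — exactly the kind of estimate that \cite{FalGri} packages into their Lemma~2, which is why following their framework is the efficient route.
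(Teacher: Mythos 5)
The paper does not actually prove Lemma~\ref{lem FalGri}: it explicitly presents it as ``a slightly adapted version of Lemma~2 of \cite{FalGri}'' and leaves the proof to that reference. So there is no internal proof to compare against, and your decision to try to reconstruct an argument is a genuinely different route from what the paper does. You do acknowledge the option of ``directly citing the relevant part of \cite{FalGri}, namely their Lemma~2,'' which is in fact exactly what the paper does, and if you had simply left it at that your answer would have matched the paper.

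The reconstruction you offer, however, rests on a misreading of the geometry, and this is a genuine gap. You write that ``each $L_N$ is a $(d-1)$-dimensional sub-box of $B_N$ of side length $N$, living on a face of $B_N$,'' and the rest of your argument (positive density of $C_N$ inside a $(d-1)$-dimensional slab, comparison with percolation on an infinite slab $\IZ^{d-1}\times\{1,\ldots,k\}$, exponentially small ``holes'' in the slab) is built on that reading. But in the paper $\LL$ is the set of \emph{edges} of $[0,1]^d$, i.e.\ one-dimensional segments where $d-1$ coordinates are pinned to $\{0,1\}$, and accordingly $L_N$ is a line of $N$ vertices along an edge of $B_N$. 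For $d=2$ the two notions coincide, but for $d\geq 3$ they do not: $|L_N|=N$, not $N^{d-1}$, and each $L_N$ lies at a ``corner'' of $B_N$ where several faces meet. Establishing that the crossing cluster reaches each such one-dimensional edge in $\geq u$ sites, and moreover hits any prescribed $u$-subset of that edge, is a more delicate boundary statement than the face-density picture you sketch; in particular, your reduction to slab percolation of dimension $d-1$ and the claim that the cluster has a positive density inside $L_N$ are not aimed at the right object. The block/renormalization argument in \cite{FalGri} is designed precisely to force the dominant cluster into the edges (and into specified sub-sets of the edges), and a correct reconstruction would have to reproduce that mechanism rather than the face-density one. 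As written, your second step (``avoiding $u$ specified sites costs at least $c^u$'') also glosses over the positive correlations between the events $\{v\in C_N\}$ for nearby $v$; that step needs a genuine argument (e.g.\ comparison with a high-density renormalized process on the edge, as in \cite{FalGri}) rather than an informal appeal to a density.
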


Our goal is to show that the following holds uniformly in $n$: With probability arbitrarily close to 1 as $N \to \infty$, there is a sequence of cubes in $D_{k(N)}^n$, each with at least one edge in common with the next, which crosses the unit cube. In order to prove this we examine the cubes $C(\Ib)$, for $\Ib\in\JJ^{(n)}$, in turn according to the ordering on $\JJ^{(n)}$, and declare some of them to be good according to the rule given below. Since the probabilistic bounds on the goodness of cubes will hold uniformly in $n$, the desired conclusion follows.

Fix integers $n,u,k \geq 1$ until Lemma \ref{lem prob}. For $m\geq 1$, identify a level-$m$ cube with a vertex in $B_{N^m} \subset \IL^d$ in the canonical way. A set of level-$m$ cubes $\{C(\Ib_1),\ldots,C(\Ib_l)\}$ is called \emph{edge-connected} if they form a connected set with respect to the adjacency relation of $\IL^d$. Whether a cube $C(\Ib)$, for $\Ib \in \JJ^{(n)}$, is called \emph{$(n,u)$-good} or not, is determined by the following inductive procedure. Let $\Ib \in \JJ^{(n)}$, and assume that the goodness of $C(\Ib')$ has been decided for all $\Ib' < \Ib$. We have the following possibilities:
\begin{itemize}
 \item[(a)] $|\Ib|=n$. Then $C(\Ib)$ is always declared $(n,u)$-good.
\item[(b)] $0 \leq |\Ib| = m<n$.
\end{itemize}
In the latter case we act as follows. Note that the subcubes $C(\Ib,\mathbf{j})$ with $\Ij \in J^d$ have already been examined, since $(\Ib,\Ij) < \Ib$.
Define the following set of level-$(m+1)$ subcubes of $C(\Ib)$, 
\begin{equation}\label{good subcubes}
   \DD(\Ib):= \{ C(\Ib,\Ij): \Ij \in J^d \text{ with } C(\Ib,\Ij) \, (n,u)\text{-good and } Z_k(\Ib,\Ij) =1 \}. 
 \end{equation}
We declare $C(\Ib)$ to be $(n,u)$-good if there exists an edge-connected set $\HH(\Ib)\subset \DD(\Ib)$ such that
\begin{itemize}
 \item[(i)] Each edge of $C(\Ib)$ intersects at least $u$ cubes of $\HH(\Ib)$;
\item[(ii)] For every $(n,u)$-good level-$m$ cube $C(\Ib')$ with $\Ib'<\Ib$ that has (at least) one edge in common with $C(\Ib)$, there are a cube of $\HH(\Ib')$ and a cube of $\HH(\Ib)$ with a common edge.
\end{itemize}
(If there is more than one candidate for $\HH(\Ib)$ we use some deterministic rule to choose one of them.)
This procedure determines whether $C(\Ib)$ is $(n,u)$-good for each $\Ib$ in turn. Note that it is easier for higher level cubes to be $(n,u)$-good than for lower level cubes. In particular, for the unit cube, i.e.\ $C(\emptyset)$, it is the hardest to be $(n,u)$-good.

The next lemma shows that if the unit cube is $(n,u)$-good then there is a sequence of cubes in $D_k^n$, each with at least one edge in common with the next, which connects the ``left-hand side'' of $[0,1]^d$ with its ``right-hand side''. If such a sequence of cubes exists in $D_k^n$ we say that \emph{percolation occurs in $D_{k}^{n}$}.

\begin{lemma}\label{lem good}
 Suppose $[0,1]^d$ is $(n,u)$-good, then percolation occurs in $D_{k}^{n}$.
\end{lemma}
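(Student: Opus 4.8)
The plan is to argue by induction on the level, showing that the $(n,u)$-goodness of $C(\emptyset)$ forces, for every $(n,u)$-good cube $C(\mathbf{I})$, the existence of a distinguished edge-connected family $\HH(\mathbf{I})$ of retained level-$n$ cubes inside $C(\mathbf{I})$ that touches every edge of $C(\mathbf{I})$ in at least $u$ cubes, and then to glue these families across adjacent cubes using property (ii) of the construction. First I would make precise the claim that when $C(\mathbf{I})$ is $(n,u)$-good with $|\mathbf{I}|=m<n$, every cube in $\HH(\mathbf{I})\subset\DD(\mathbf{I})$ is itself $(n,u)$-good and retained (by definition of $\DD(\mathbf{I})$ in \eqref{good subcubes}, since $Z_k(\mathbf{I},\mathbf{j})=1$ for each such cube), so that recursively descending through the $\HH$'s one reaches only retained level-$n$ cubes, i.e.\ cubes in $D_k^n$. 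Thus define $G(\mathbf{I})$ to be the set of level-$n$ cubes obtained by iterating: start from $\HH(\mathbf{I})$, and for each cube already selected take its associated $\HH$; when $|\mathbf{I}|=n$ this is just $\{C(\mathbf{I})\}$. All cubes in $G(\emptyset)$ lie in $D_k^n$.

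The key step is to prove that $G(\mathbf{I})$ is edge-connected, by downward induction on $n-|\mathbf{I}|$. For $|\mathbf{I}|=n$ it is trivial. For $|\mathbf{I}|=m<n$: $\HH(\mathbf{I})$ is edge-connected as a set of level-$(m+1)$ cubes; each such cube $C(\mathbf{I},\mathbf{j})\in\HH(\mathbf{I})$ is $(n,u)$-good, so by the induction hypothesis its own family $G(\mathbf{I},\mathbf{j})$ is edge-connected; and whenever two cubes $C(\mathbf{I},\mathbf{j}),C(\mathbf{I},\mathbf{j}')\in\HH(\mathbf{I})$ share an edge, property (ii) of the $(n,u)$-goodness definition (applied within $C(\mathbf{I})$, noting that one of $\mathbf{j},\mathbf{j}'$ was examined before the other) guarantees a cube of $\HH(\mathbf{I},\mathbf{j})$ and a cube of $\HH(\mathbf{I},\mathbf{j}')$ that share an edge, hence a level-$n$ cube of $G(\mathbf{I},\mathbf{j})$ edge-connected to one of $G(\mathbf{I},\mathbf{j}')$. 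Walking along the edge-connectedness of $\HH(\mathbf{I})$ and stitching the pieces $G(\mathbf{I},\mathbf{j})$ together at these junctions shows $G(\mathbf{I})$ is edge-connected. Taking $\mathbf{I}=\emptyset$ gives an edge-connected subset of $D_k^n$.

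It remains to see that $G(\emptyset)$ actually crosses the unit cube, i.e.\ contains a cube meeting $\{0\}\times[0,1]^{d-1}$ and one meeting $\{1\}\times[0,1]^{d-1}$. This follows from property (i): since $C(\emptyset)$ is $(n,u)$-good, each edge $L$ of the unit cube is intersected by at least $u\geq 1$ cubes of $\HH(\emptyset)$; pick an edge $L$ lying in the left face $\{0\}\times[0,1]^{d-1}$ and an edge $L'$ in the right face, and take cubes $C(\mathbf{j})\in\HH(\emptyset)$ meeting $L$ and $C(\mathbf{j}')\in\HH(\emptyset)$ meeting $L'$; descending once more into $\HH(\mathbf{j})$ and using property (i) at that level (the face of $C(\emptyset)$ containing $L$ restricts to a face of $C(\mathbf{j})$, whose edge there is hit by $\HH(\mathbf{j})$), and iterating down to level $n$, one produces a level-$n$ cube in $G(\emptyset)$ touching the left face, and similarly one touching the right face. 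Combined with edge-connectedness of $G(\emptyset)$, this is exactly the statement that percolation occurs in $D_k^n$.

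The only genuinely delicate point — the ``main obstacle'' — is the bookkeeping in the gluing step: one must check that property (ii) is invoked with the correct orientation (the cube examined later sees the already-examined neighbour), that ``sharing an edge'' of level-$(m+1)$ cubes is compatible under refinement with ``sharing an edge'' of the level-$n$ subcubes living on the common $(d-1)$-face, and that the recursive selection of $\HH$'s is consistent (using the stated deterministic tie-breaking rule so that $G(\mathbf{I})$ is well defined). None of this requires new ideas; it is a careful but routine induction, and I would write it out as a single downward induction on $n-|\mathbf{I}|$ carrying both ``$G(\mathbf{I})$ edge-connected'' and ``$G(\mathbf{I})$ meets each face of $C(\mathbf{I})$'' as the inductive hypothesis.
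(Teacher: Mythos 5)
Your proposal is essentially the paper's argument, but organized as a single ``bottom-up'' induction on $n-|\Ib|$ rather than the paper's ``top-down'' level-by-level chain replacement. The paper's induction hypothesis is simply: at level $m$ there exists an edge-connected, left-to-right crossing chain of retained $(n,u)$-good cubes. To pass from level $m$ to $m+1$, property (ii) is invoked \emph{once} for each consecutive pair $\Ib_i,\Ib_{i+1}$ in the chain, linking $\HH(\Ib_i)$ to $\HH(\Ib_{i+1})$, and the union of the $\HH(\Ib_i)$'s then contains the new crossing chain. In particular the paper never needs to track the entire level-$n$ refinement $G(\Ib)$.

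The caveat in your version is that the stated inductive hypothesis --- ``$G(\Ib)$ edge-connected and $G(\Ib)$ meets each face of $C(\Ib)$'' --- is not strong enough to deliver the gluing step you claim. When $C(\Ib,\Ij)$ and $C(\Ib,\Ij')$ share an edge, property (ii) gives you level-$(m+2)$ cubes $C(\Jb)\in\HH(\Ib,\Ij)$ and $C(\Jb')\in\HH(\Ib,\Ij')$ with a common edge; to conclude, as you do, that $G(\Ib,\Ij)$ and $G(\Ib,\Ij')$ contain level-$n$ cubes that are $\IL^d$-adjacent, you need to push this shared edge down through every level $m+2,m+3,\ldots,n$ by reapplying (ii), and ``$G$ meets each face'' does not supply that --- two sets can each meet the shared face without containing adjacent cubes. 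You do flag exactly this as the ``main obstacle,'' and it is fixable: the inductive hypothesis should be strengthened to include something like ``if two $(n,u)$-good level-$m$ cubes share an edge then their $G$'s contain level-$n$ cubes sharing an edge,'' proved by the same downward recursion. The paper's formulation sidesteps this entirely because the chain-replacement uses (ii) exactly once per level, so no nested iteration is needed. So: right idea, same ingredients (properties (i), (ii), the ordering, and the recursive $\HH$'s), but the paper's one-level-at-a-time organization is cleaner and your stated induction needs a third clause to close.
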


\begin{proof}
Assume that the unit cube, i.e.\ $C(\emptyset)$, is $(n,u)$-good. We will show, with a recursive argument, that for $1\leq m \leq n$ there exists an edge-connected chain of retained $(n,u)$-good level-$m$ cubes which joins $\{0\} \times [0,1]^{d-1}$ and $\{1\} \times [0,1]^{d-1}$. In particular, this holds for $m=n$ and hence percolation occurs in $D_k^n$.

Since the unit cube is assumed to be $(n,u)$-good, $\DD(\emptyset)$ contains by definition an edge-connected subset $\HH(\emptyset)$ of retained $(n,u)$-good level-1 subcubes, such that each edge of $C(\emptyset)$ intersects at least $u$ cubes of $\HH(\emptyset)$. In particular, there is a sequence of retained $(n,u)$-good edge-connected level-1 cubes that connects the left-hand side of $[0,1]^d$ with its right-hand side.

Let $1\leq m < n$ and assume that there exists an edge-connected chain $C(\Ib_1),\ldots,C(\Ib_l)$ of retained $(n,u)$-good level-$m$ cubes which connects the left-hand side of $[0,1]^d$ with its right-hand side. For each $i$, $1 \leq i \leq l$, either $\Ib_i < \Ib_{i+1}$ or $\Ib_{i+1}<\Ib_i$. By condition (ii), there exist level-$(m+1)$ cubes of $\HH(\Ib_{i+1})$ which are edge-connected to level-$(m+1)$ cubes of $\HH(\Ib_i)$. These level-$(m+1)$ cubes $C(\Jb)$ are all $(n,u)$-good and have $Z_k(\Jb)=1$, by \eqref{good subcubes} and the definition of the $\HH(\Ib)$. It follows that there is an edge-connected chain of retained $(n,u)$-good level-$(m+1)$ cubes $C(\Jb)$ which joins $\{0\} \times [0,1]^{d-1}$ and $\{1\} \times [0,1]^{d-1}$. 
\end{proof}

For $\Ib \in \JJ^{(n)}$, define the index $\Ib^- \in \JJ^{(n)}$ by
\[
 \Ib^- = \max\{ \Ib': \Ib' < \Ib \text{ and } |\Ib'| \leq |\Ib| \}.
\]
If there is no such index, $\Ib^-$ is left undefined. For each $\Ib \in \JJ^{(n)}$ we let $\FF(\Ib)$ denote the $\sigma$-field 
\[
 \FF(\Ib) = \sigma(Z_k(\Ib',\Ij): |\Ib'| \leq n-1, \Ib' \leq \Ib, \Ij \in J^{d}).
\]
If $\Ib^-$ is undefined, we take $\FF(\Ib^-)$ to be the trivial $\sigma$-field. Note that $\FF(\Ib)$ is generated by those $Z_k$ that have been examined prior to deciding whether $C(\Ib)$ is $(n,u)$-good. In particular, by virtue of the ordering on the cubes as introduced in Section \ref{sec pre}, $\FF(\Ib^-)$ does \emph{not} contain any information about subcubes of $\Ib$.

Let $p>\pc$ and let $(k(N))_{N \geq 2}$ be a sequence such that $k(N)/N^d \geq p$, for all $N \geq 2$. We want to prove that, for every $\epsilon>0$, the probability that $[0,1]^d$ is $(n,u)$-good in the $k(N)$-model is at least $1-\epsilon$, 
for $N \geq N_0$, where $N_0$ is an integer which has to be taken sufficiently large to satisfy certain probabilistic bounds but is independent of $n$.

Let us first give a sketch of the proof. Fix $N\geq N_0$ and consider the $k(N)$-model. We use a recursive argument. The smallest level-$n$ cube according to the ordering on $\JJ^{(n)}$ is by definition $(n,u)$-good. Let $\Ib \in \JJ^{(n)}$ and assume that $\IP_{k(N)}(C(\Ib') \text{ is } (n,u)\text{-good} \mid \FF(\Ib'^-)) \geq 1-\epsilon$ for all $\Ib'<\Ib$. We prove that, given $\FF(\Ib^-)$, $C(\Ib)$ is $(n,u)$-good with probability at least $1-\epsilon$. The proof of this consists of a coupling between a product measure with density $\pi \in (\pc,(1-\epsilon)p)$ in the box $B_N$ and the law of the set of subcubes $C(\Ib,\Ij)$ of $C(\Ib)$ which are $(n,u)$-good and satisfy $Z_{k(N)}(\Ib,\Ij) = 1$. Applying Lemma \ref{lem FalGri} to the product measure combined with the coupling yields that the subcubes satisfy properties (i) and (ii) with probability at least $1-\epsilon$. Therefore, given $\FF(\Ib^-)$, $C(\Ib)$ is $(n,u)$-good with probability at least $1-\epsilon$. Iterating this argument then yields that the unit cube is $(n,u)$-good with probability at least $1-\epsilon$, for $N \geq N_0$.

The proof in \cite{FalGri} of the analogous result that $\sigma(p,N,d) \to 1$ as $N \to \infty$ for $p> \pc$ is considerably less involved. In the context of \cite{FalGri}, subcubes are retained with probability $p$ independently of other cubes, which is not the case in $k$-fractal percolation. Therefore, they can directly show that there exists $\pi>\pc$ such that, for $\Ib \in \JJ^{(n)}$, the law of the set of subcubes $C(\Ib,\Ij)$ of $C(\Ib)$ which are good and satisfy $Z_p(\Ib,\Ij) = 1$, dominates an i.i.d.\ process on the box $B_N$ with density $\pi$.

We need the following result for binomially distributed random variables, which we state as a lemma for future reference. Since the result follows easily from Chebyshev's inequality, we omit the proof.

\begin{lemma}\label{lem good3}
Let $p > \pc$ and let $(k(N))_{N \geq 2}$ be a sequence of integers such that $k(N)/N^d \geq p$ for all  $N \geq 2$. Let $\epsilon>0$ be such that $(1-\epsilon) p > \pc$, let $\pi \in (\pc, (1-\epsilon) p)$ and define $M:= ((1-\epsilon)p+\pi)N^d / 2$. There exists $N_2$ such that 
\[
\IP(\{X(k(N),1-\epsilon) \geq M\} \cap \{X'(N^{d},\pi) \leq M\}) \geq 1-\epsilon/2,
\]
for $N \geq N_2$, where $X$ and $X'$ are independent, binomially distributed random variables with the indicated parameters.
\end{lemma}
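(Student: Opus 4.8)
The plan is to deduce the estimate directly from Chebyshev's inequality, after observing that $M$ lies strictly between the means of the two binomial variables, with a gap on each side that grows linearly in $N^d$.

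First I would record the elementary facts about the parameters. Since $\pi<(1-\epsilon)p$ by assumption and $k(N)\geq pN^d$, we have
\[
\pi N^d < M = \frac{((1-\epsilon)p+\pi)N^d}{2} < (1-\epsilon)pN^d \leq (1-\epsilon)k(N).
\]
Writing $\delta := \frac{(1-\epsilon)p-\pi}{2}>0$, both gaps above are at least $\delta N^d$: indeed
$\IE X(k(N),1-\epsilon)-M=(1-\epsilon)k(N)-M\geq (1-\epsilon)pN^d-M=\delta N^d$ and $M-\IE X'(N^d,\pi)=M-\pi N^d=\delta N^d$. So the events $\{X(k(N),1-\epsilon)<M\}$ and $\{X'(N^d,\pi)>M\}$ are each deviations of size at least $\delta N^d$ from the respective mean.

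Next I would bound the variances. A binomial variable $X(n,q)$ has variance $nq(1-q)\leq n/4$, and trivially $k(N)\leq N^d$, so both $\Var X(k(N),1-\epsilon)$ and $\Var X'(N^d,\pi)$ are at most $N^d/4$. Applying Chebyshev's inequality to each variable separately then gives
\[
\IP\bigl(X(k(N),1-\epsilon)<M\bigr)\leq \frac{N^d/4}{\delta^2 N^{2d}}=\frac{1}{4\delta^2 N^d},
\qquad
\IP\bigl(X'(N^d,\pi)>M\bigr)\leq \frac{1}{4\delta^2 N^d}.
\]
By the union bound the complement of the event in the statement has probability at most $\frac{1}{2\delta^2 N^d}$, which is smaller than $\epsilon/2$ as soon as $N^d>\frac{1}{\epsilon\delta^2}$; hence any $N_2$ with $N_2^d>1/(\epsilon\delta^2)$ works, and using independence of $X$ and $X'$ is not even needed for this bound.

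Since the argument is just an application of a standard concentration inequality, I do not expect a genuine obstacle; the only point requiring a moment's care is verifying that $M$ is sandwiched strictly between the two means with a linear-in-$N^d$ margin on each side, which is exactly what the definition $M=((1-\epsilon)p+\pi)N^d/2$ together with $\pi<(1-\epsilon)p\leq(1-\epsilon)k(N)/N^d$ delivers.
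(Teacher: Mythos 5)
Your proof is correct and matches the approach the paper indicates but omits: the authors simply remark that the lemma ``follows easily from Chebyshev's inequality'' and leave out the details, and your argument is exactly the natural way to carry that out. You verify that $M$ sits a distance $\delta N^d$ below $\IE X(k(N),1-\epsilon)$ and $\delta N^d$ above $\IE X'(N^d,\pi)$ with $\delta=((1-\epsilon)p-\pi)/2$, bound each variance by $N^d/4$, apply Chebyshev to each tail, and finish with a union bound; the observation that independence of $X$ and $X'$ is not actually needed is a correct bonus.
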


We now prove that, for any $\epsilon>0$, the unit cube is $(n,u)$-good with probability at least $1-\epsilon$, for $N$ large enough but independent of $n$.

\begin{lemma} \label{lem prob}
Let $p> \pc$ and let $(k(N))_{N \geq 2}$ be a sequence of integers such that $k(N)/N^{d} \geq  p$, for all $N \geq 2$. Let $\epsilon>0$ be such that $(1-\epsilon)p>\pc$. Take $\pi \in (\pc, (1-\epsilon)p)$ and set $q=3^d$. Let $u$ and $N_1$ be given by Lemma \ref{lem FalGri}. Let $N_2$ be given by Lemma \ref{lem good3}.  Set $N_0 =\max\{N_1,N_2\}$. Then, for all $n \geq 1$,
\begin{equation}\label{unit cube good}
 \IP_{k(N)}([0,1]^d \text{ is } (n,u)\text{-good}) \geq 1-\epsilon,
\end{equation}
for all $N \geq N_0$.
\end{lemma}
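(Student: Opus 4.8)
The plan is to carry out the recursive argument sketched above, examining the cubes $C(\Ib)$ for $\Ib \in \JJ^{(n)}$ in increasing order and showing by induction on the ordering that
\[
\IP_{k(N)}\bigl(C(\Ib) \text{ is } (n,u)\text{-good} \mid \FF(\Ib^-)\bigr) \geq 1-\epsilon \quad\text{a.s.},
\]
for all $N \geq N_0$. Applying this with $\Ib = \emptyset$ yields \eqref{unit cube good}. The base case is trivial: if $|\Ib| = n$, then $C(\Ib)$ is $(n,u)$-good by fiat (case (a)), so the bound holds with room to spare. So fix $\Ib$ with $0 \leq |\Ib| = m < n$ and assume the bound has been established for all $\Ib' < \Ib$.

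The heart of the matter is the coupling step. Condition on $\FF(\Ib^-)$; this $\sigma$-field records all the $Z_{k(N)}$'s examined before we reach $C(\Ib)$, and crucially, by the ordering of Section \ref{sec pre}, it contains no information about the subcubes $C(\Ib,\Ij)$ themselves, but it does determine whether each $C(\Ib,\Ij)$ is $(n,u)$-good (since $(\Ib,\Ij) < \Ib$) and which neighbouring level-$m$ cubes $C(\Ib')$ with $\Ib' < \Ib$ are $(n,u)$-good together with their chosen sets $\HH(\Ib')$. By the induction hypothesis, each of the subcubes $C(\Ib,\Ij)$ is $(n,u)$-good with conditional probability at least $1-\epsilon$, and — since whether $C(\Ib,\Ij)$ is $(n,u)$-good depends only on $Z_{k(N)}$'s strictly inside $C(\Ib,\Ij)$, which are independent across different $\Ij$ and independent of the retention variables $Z_{k(N)}(\Ib,\Ij)$ — the family of indicators $\{1[C(\Ib,\Ij)\text{ is }(n,u)\text{-good}]\}_{\Ij \in J^d}$ stochastically dominates an i.i.d.\ Bernoulli$(1-\epsilon)$ family, independently of the uniform choice of which $k(N)$ subcubes satisfy $Z_{k(N)}(\Ib,\Ij)=1$. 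Now I would argue as follows: the number of $\Ij$ with $Z_{k(N)}(\Ib,\Ij)=1$ equals $k(N)$, and among these, the number that are also $(n,u)$-good stochastically dominates $X(k(N),1-\epsilon)$. On the event $\{X(k(N),1-\epsilon) \geq M\}$ there are at least $M = ((1-\epsilon)p+\pi)N^d/2$ subcubes in $\DD(\Ib)$, placed uniformly at random among the $N^d$ positions of $B_N$ (after a further thinning that only helps). Comparing this uniform placement of $\geq M$ black vertices to a product measure $P_\pi$ on $B_N$: on the event $\{X'(N^d,\pi) \leq M\}$, the set of black vertices under $P_\pi$ has size at most $M$, so one can couple so that the $P_\pi$-black set is contained in $\DD(\Ib)$ (the standard fact that a uniformly random $M$-subset dominates, in the set-inclusion order, a uniformly random subset of size $\leq M$, hence dominates the $P_\pi$-configuration restricted to that size). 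By Lemma \ref{lem good3}, the intersection event $\{X(k(N),1-\epsilon)\geq M\}\cap\{X'(N^d,\pi)\leq M\}$ has probability at least $1-\epsilon/2$ for $N \geq N_2$; so with conditional probability at least $1-\epsilon/2$ the set $\DD(\Ib)$ contains a $P_\pi$-distributed black configuration.

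On that coupling event, I apply Lemma \ref{lem FalGri} with this $\pi > \pc$, with $q = 3^d$, and with the sets $U(1),\ldots,U(q)$ chosen to be the edges $\HH(\Ib')_N \cap (\text{shared edge})$ for the at most $3^d - 1$ neighbouring $(n,u)$-good level-$m$ cubes $C(\Ib')$, $\Ib' < \Ib$ (padding with dummy admissible sets to reach exactly $q$): each such $U(r)$ lies in some edge $L_N$ and has $|U(r)| \geq u$ because $C(\Ib')$ being $(n,u)$-good forces $\HH(\Ib')$ to meet every edge of $C(\Ib')$ in $\geq u$ cubes, in particular the shared one. Lemma \ref{lem FalGri} then gives, with $P_\pi$-probability at least $1-\epsilon/2$, a black cluster $C_N \subset \DD(\Ib)$ meeting every edge of $B_N$ in at least $u$ cubes and meeting each $U(r)$; taking $\HH(\Ib) := C_N$ gives exactly properties (i) and (ii) in the definition of $(n,u)$-good. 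Combining the two $1-\epsilon/2$ bounds via a union bound shows $C(\Ib)$ is $(n,u)$-good with conditional probability at least $1-\epsilon$, completing the induction. The main obstacle, and the step requiring the most care, is the second coupling: making precise that the \emph{uniform} (micro-canonical) placement of the good-and-retained subcubes dominates an honest i.i.d.\ $P_\pi$ configuration on $B_N$ — this is where the passage from the $k$-model's hard constraint to a genuine product measure happens, and it is precisely the point where the proof departs from the simpler argument in \cite{FalGri}; one must track that the stochastic domination by Bernoulli$(1-\epsilon)$ and the uniform-subset-domination can be realised simultaneously on one probability space, conditionally on $\FF(\Ib^-)$, and that the resulting bound is uniform in $n$ and in $\Ib$.
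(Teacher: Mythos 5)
Your proof follows essentially the same route as the paper: a recursion on the ordering of $\JJ^{(n)}$, a two-stage coupling (first from the $(n,u)$-good retained subcubes to a Bernoulli$(1-\epsilon)$-thinned uniform $k(N)$-subset, and then from that to a $P_\pi$-product configuration using Lemma \ref{lem good3}), and finally Lemma \ref{lem FalGri} with $q=3^d$ applied to the contact sets of the neighbouring $\HH(\Ib')$ along the shared edges. One point to correct in your justification: the claim that the indicators $\{1[C(\Ib,\Ij)\text{ is }(n,u)\text{-good}]\}_{\Ij \in J^d}$ are independent across $\Ij$ is not right. Condition (ii) in the definition of $(n,u)$-good makes the goodness of $C(\Ib,\Ij)$ depend on the chosen sets $\HH(\Ib,\Ij')$ of the earlier siblings $\Ij' < \Ij$, so the indicators are genuinely dependent. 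The correct and sufficient statement — the one the paper actually uses — is the conditional bound $\IP_{k(N)}\bigl(C(\Ib,\Ij(l)) \text{ is } (n,u)\text{-good} \mid \FF((\Ib,\Ij(l))^-)\bigr) \geq 1-\epsilon$ for each $l$, from which stochastic domination by an i.i.d.\ Bernoulli$(1-\epsilon)$ family on $B_N$ follows by the sequential coupling you allude to at the end. With that substitution your argument matches the paper's.
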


\begin{proof}
Fix $N \geq N_0$ and $n\geq 1$ and consider the $k(N)$-fractal model. Our aim is to show that 
\begin{equation}\label{induction2}
 \IP_{k(N)}(C(\Ib) \text{ is } (n,u)\text{-good} \mid \FF(\Ib^-)) \geq 1 -\epsilon
\end{equation}
holds for all $\Ib \in \JJ^{(n)}$. Taking $\Ib=\emptyset$ then yields \eqref{unit cube good}. We prove this with a recursive argument. Let $\Ib_0$ be the smallest index in $\JJ^{(n)}$, according to the ordering on $\JJ^{(n)}$. By virtue of the ordering, we have $|\Ib_0| = n$. Hence, by definition, $C(\Ib_0)$ is $(n,u)$-good. In particular, \eqref{induction2} holds for $\Ib_0$.

The recursive step is as follows. Take an index $\Ib \in \JJ^{(n)}$ and assume that  
\begin{equation}\label{induction}
 \IP_{k(N)}(C(\Ib') \text{ is } (n,u)\text{-good} \mid \FF(\Ib'^-)) \geq 1 -\epsilon,
\end{equation}
has been established for all indices $\Ib'$ in $\JJ^{(n)}$ less than $\Ib$. We have to show that \eqref{induction2} holds for $\Ib$ given this assumption. We have two cases:
\begin{itemize}
 \item[(a)] $|\Ib| =n$; then $\IP_{k(N)}(C(\Ib) \text{ is } (n,u)\text{-good}) =1$ and \eqref{induction2} is true.
\item[(b)] $0 \leq |\Ib| = m < n$.
\end{itemize}

For case (b), given $\FF(\Ib^-)$, the goodness of $C(\Ib')$ is determined (in particular) for all $\Ib' < \Ib$ with $|\Ib|=m$. Let
\begin{eqnarray*}
 \QQ=\left\{\begin{array}{cc} \Ib': & \Ib'<\Ib \text{ and } C(\Ib') \text{ is an } (n,u)\text{-good level-}m \\
& \text{ cube with an edge in common with }C(\Ib)           
          \end{array}\right\}.
\end{eqnarray*}
For each $\Ib' \in \QQ$, let $E(\Ib')$ be some common edge of $C(\Ib)$ and $C(\Ib')$. Since $C(\Ib')$ is $(n,u)$-good, there are at least $u$ level-$(m+1)$ subcubes in $\HH(\Ib')$ which intersect $E(\Ib')$; call this set of subcubes $\UU(\Ib')$. To see whether $C(\Ib)$ is $(n,u)$-good, we look at $C(\Ib,\Ij(l))$ where $\Ij(l), 1 \leq l \leq N^d,$ are the vectors of $J^d$ arranged in order. We have $(\Ib,\Ij(l)) < \Ib$, so by the induction hypothesis \eqref{induction} we have
\begin{equation}\label{induction sub}
 \IP_{k(N)}(C(\Ib,\Ij(l)) \text{ is } (n,u)\text{-good} \mid \FF((\Ib,\Ij(l))^-)) \geq 1-\epsilon,
\end{equation}
for all $l$. Note that $\FF((\Ib,\Ij(1))^-) = \FF(\Ib^-)$.

We identify each subcube of $C(\Ib)$ in the canonical way with a vertex in $B_N$. We will construct three random subsets $G_1, G_2, G_3$ of $B_N$ on a common probability space with the following properties:
\begin{itemize}
\item[(I)] the law of $G_1$ equals the law of the set of subcubes $C(\Ib,\Ij)$ of $C(\Ib)$ which are $(n,u)$-good and satisfy $Z_{k(N)}(\Ib,\Ij)=1$;
\item[(II)] $G_2$ is obtained by first selecting $k(N)$ vertices of $B_N$ uniformly and then retaining each selected vertex with probability $1-\epsilon$, independently of other vertices;
\item[(III)] the law of $G_3$ is the Bernoulli product measure with density $\pi$ on $B_N$;
\item[(IV)] $G_1 \supset G_2$;
\item[(V)] $\IP(G_2 \supset G_3) \geq 1-\epsilon/2$.
\end{itemize}

From \eqref{induction sub} and a standard coupling technique, sometimes referred to as sequential coupling (see e.g.\ \cite{LigSte}), the construction of $G_1$ and $G_2$ with properties (I), (II) and (IV) is straightforward. The construction of $G_3$ such that properties (III) and (V) hold is given below. Let $|G_2|$ denote the cardinality of the set $G_2$. Define $M=((1-\epsilon)p+\pi)N^d / 2$ and let $R$ be a number drawn from a binomial distribution with parameters $N^d$ and $\pi$, independently of $G_1$ and $G_2$. If $|G_2| \geq M$ and $M\geq R$ we select $R$ vertices uniformly  out of the $|G_2|$ retained vertices of $G_2$ and call this set $G_3$. Otherwise, we select, independently of $G_1$ and $G_2$, $R$ vertices of $B_N$ in a uniform way and call this set $G_3$. From the construction (note that also $G_2$ was obtained in a uniform way) it is clear that $G_3$ satisfies property (III). Observe that $|G_2|$ has a binomial distribution with parameters $k(N)$ and $1-\epsilon$. From Lemma \ref{lem good3} it follows that
\[
 \IP(\{|G_2| \geq M \} \cap \{R \leq M\}) \geq 1-\epsilon/2.
\]
Hence, property (V) also holds.

Let us now return to the goodness of $C(\Ib)$. As before, we identify the random subsets $G_1,G_2,G_3$ of $B_N$ with
the corresponding sets of subcubes  of $C(\Ib)$ in the canonical way. It then follows from property (III) and Lemma \ref{lem FalGri} (note that $\QQ$ has cardinality at most $3^d =q$) that $G_3$ has an edge-connected subset which satisfies the following properties with probability at least $1-\epsilon/2$:
\begin{itemize}
  \item[(i)] intersects every edge of $C(\Ib)$ with at least $u$ cubes;
 \item[(ii)] contains a cube that is edge-connected to a cube of $\UU(\Ib')$, for all $\Ib' \in \QQ$. 
 \end{itemize}

Combining properties (IV), (V) and the previous paragraph we obtain
\begin{eqnarray*}
 \lefteqn{\IP_{k(N)}(C(\Ib) \text{ is } (n,u)\text{-good} \mid \FF(\Ib^-))} \\
& \geq& \IP(\{G_1 \supset G_3 \} \cap \{G_3 \text{ satisfies properties (i) and (ii)}\}) \\
&\geq& 1-\epsilon.
\end{eqnarray*}
Therefore, \eqref{induction2} holds for the index $\Ib$ given that \eqref{induction} holds for all indices $\Ib'<\Ib$. A recursive use of this argument -- recall that \eqref{induction2} is valid for $\Ib_0$ (the smallest index according to the ordering) -- yields that \eqref{induction2} holds for all $\Ib$. Taking $\Ib=\emptyset$ in \eqref{induction2} proves the lemma.
\end{proof}

We are now able to conclude the proof of Theorem \ref{thm supercritical}.

\begin{proof}[Proof of Theorem \ref{thm supercritical}]
Let $p>\pc$ and consider a sequence $(k(N))_{N \geq 2}$ such that $k(N)/N^d \geq p$, for all $N \geq 2$. We get, using both Lemma \ref{lem prob} and Lemma \ref{lem good}, that for any $\epsilon >0$ such that $(1-\epsilon)p > \pc$, there exists $N_0$, depending on $\epsilon$, such that
\begin{equation}\label{level percolation}
 \IP_{k(N)}(\text{percolation in } D^n_{k(N)}) \geq \IP_{k(N)}([0,1]^d \text{ is } (n,u)\text{-good}) \geq 1-\epsilon,
\end{equation}
for $N \geq N_0$. It is well known (see e.g.\ \cite{FalGri}) that
\[
 \{[0,1]^d \text{ is crossed by } D_{k(N)} \} = \bigcap_{n=1}^{\infty} \{\text{percolation in } D^n_{k(N)}\}. 
\]
Hence, taking the limit $n\to \infty$ in \eqref{level percolation} yields that for $\epsilon > 0$ small enough
\begin{equation}\label{final percolation}
 \IP_{k(N)}([0,1]^d \text{ is crossed by } D_{k(N)}) \geq 1-\epsilon,
\end{equation}
for $N \geq N_0$. Therefore, 
\[
 \theta(k(N),N,d) \to 1,
\]
as $N\to \infty$.
\end{proof}

\subsection{Proof of Theorem \ref{thm generalization}}\label{sec generalization}

\begin{proof}[Proof of Theorem \ref{thm generalization}]
We use the idea of the proof of Theorem \ref{thm subcritical} and the result of Theorem \ref{thm supercritical}. Fix some $p_0$ such that $\pc<p_0 < p$ and set $k(N):=\lfloor p_0 N^d \rfloor$. Consider the event $F$ that in the GFP model with generator $Y = Y(N,d)$ there exists an infinite tree of retained subcubes such that each subcube in the tree contains at least $k(N)$ retained subcubes in the tree. Similar to the proof of Lemma \ref{lem high prob}, we prove that $\IP(F)\to 1$ as $N\to\infty$. We then show that the law of $D_{k(N)}$ is stochastically dominated by the conditional law of $D_Y$, conditioned on the event $F$. By Theorem \ref{thm supercritical} we can then conclude that $\phi(Y(N,d),N,d) \to 1$ as $N \to \infty$.

Consider the construction of $D_{Y}$. We will use the same definition of $m$-good as in Section \ref{sec subcritical}, that is, if a level-$n$ cube is retained and contains at least $k(N)$ retained subcubes, we call this level-$n$ cube 0-good. Recursively, we say that a level-$n$ cube is $(m+1)$-good if it is retained and contains at least $k(N)$ $m$-good level-$(n+1)$ subcubes. We call the unit cube $\infty$-good if it is $m$-good for every $m \geq 0$. Define the following events
\begin{eqnarray*}
 F_m:=\{[0,1]^d \text{ is }m\text{-good}\},\\
F:=\{[0,1]^d \text{ is }\infty\text{-good}\}.
\end{eqnarray*}

We will show that for every $\epsilon>0$ such that $(1-\epsilon)p > p_0$ there exists $N_0=N_0(\epsilon)$ such that, for all $m \geq 0$,
\begin{equation}\label{claim m-good}
\IP(F_m) > 1-\epsilon, \quad \text{for all } N \geq N_0 .
\end{equation}

The proof of \eqref{claim m-good} is similar to the proof of Lemma \ref{lem high prob}. Let $\epsilon>0$ be such that $(1-\epsilon)p > p_0$. Take $\delta>0$ such that $(1-\epsilon)p>p_0+\delta$. Then, take $N_0$ so large that
\begin{eqnarray}
 1 - \frac{1}{4\delta^2 N} > 1-\epsilon/2 \quad \text{and} \label{delta bound}\\
\IP(Y \geq p N^d) > 1-\epsilon/2 \label{Y bound},
\end{eqnarray}
for all $N\geq N_0$. We prove that \eqref{claim m-good} holds for this $N_0$ and all $m\geq 0$, by induction on $m$. Since $k(N) = \lfloor p_0 N^d \rfloor \leq pN^d$ it follows from \eqref{Y bound} that $\IP(F_0)>1-\epsilon$, for all $N\geq N_0$.

Next, assume that \eqref{claim m-good} holds for some $m\geq 0$. The probability that a level-1 cube is $m$-good, given that it is retained, is equal to $\IP(F_m)$. It follows that, given that the number of retained level-1 cubes equals $y$, the number of $m$-good level-1 cubes has a binomial distribution with parameters $y$ and $\IP(F_m)$. By our choices for $N_0$ and $\delta$ we get
\begin{eqnarray*}
 \IP(F_{m+1}) &=& \sum_{y \geq k(N)} \IP(X(y,\IP(F_m)) \geq k(N)) \, \IP(Y = y) \\
& \geq& \IP(X(\lfloor pN^d \rfloor,\IP(F_m)) \geq p_0 N^d) \, \IP(Y \geq \lfloor pN^d \rfloor) \\
&\geq& \IP(X(\lfloor pN^d \rfloor,1-\epsilon)\geq p_0 N^d) (1-\epsilon/2) \\
&\geq& \left(1-\frac{\Var X(\lfloor pN^d \rfloor,1-\epsilon)}{(p_0 - (1-\epsilon)p)^2 N^{2d}}\right)(1-\epsilon/2) \\
&\geq& \left(1-\frac{(1-\epsilon)\epsilon pN^d}{\delta^2 N^{2d}}\right) (1-\epsilon/2) \\
&\geq& \left(1-\frac{1}{4 \delta^2 N^d}\right)(1-\epsilon/2) \\
&\geq& (1-\epsilon/2)(1-\epsilon/2)>1-\epsilon,
\end{eqnarray*}
 for all $N \geq N_0$. Hence, the induction step is valid.

Analogously to the proof of Theorem \ref{thm subcritical} we use the event $F=\bigcap_{m=1}^{\infty} F_m$ to construct two random subsets $\tilde D_{k(N)}$ and $\tilde D_{Y}$ on a common probability space, with the following properties: 
\begin{itemize}
 \item[(i)] $\tilde{D}_{k(N)} \subset \tilde{D}_{Y}$;
\item[(ii)] the law of $\tilde{D}_{Y}$ is stochastically dominated by the conditional law of $D_{Y}$, conditioned on the event $F$;
\item[(iii)] the law of $\tilde D_{k(N)}$ is equal to the law of $D_{k(N)}$.
\end{itemize}
This construction is the same (modulo replacing the binomial distribution with $Y$) as in the proof of Theorem \ref{thm subcritical} and is therefore omitted.

From properties (i)-(iii) and Theorem \ref{thm supercritical} we get
\begin{eqnarray*}
 \lefteqn{\IP([0,1]^d \text{ is crossed by } D_{Y(N,d)} | F) } \\
&\geq& \IP([0,1]^d \text{ is crossed by } \tilde D_{Y(N,d)})  \\
&\geq & \IP([0,1]^d \text{ is crossed by } \tilde D_{k(N)}) \\
&=&\IP([0,1]^d \text{ is crossed by } D_{k(N)}) \to 1,
\end{eqnarray*}
as $N \to \infty$. Since \eqref{claim m-good} implies that $\IP(F) \to 1$ as $N \to \infty$, we obtain
\[
 \IP([0,1]^d \text{ is crossed by } D_{Y(N,d)}) \to 1,
\]
as $N \to \infty$.
\end{proof}

\section{Proofs of the fat fractal results} \label{sec main fat}

In this section we prove our results concerning fat fractal percolation. First, we state an elementary property of the fat fractal percolation model; it follows immediately from Fubini's theorem and we omit the proof.

\begin{proposition}\label{prop expectation}
The expected Lebesgue measure of the limit set of fat fractal percolation is given by 
\[
\IE \lambda(\Dfat) = \prod_{n=1}^{\infty} p_n.
\]
\end{proposition}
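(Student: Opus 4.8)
The plan is to apply Fubini's theorem to the non-negative, jointly measurable function $(x,\omega) \mapsto \mathbf{1}_{\{x \in \Dfat(\omega)\}}$ on $[0,1]^d \times \Omega$, which gives
\[
\IE \lambda(\Dfat) = \IE \int_{[0,1]^d} \mathbf{1}_{\{x \in \Dfat\}}\, dx = \int_{[0,1]^d} \IP(x \in \Dfat)\, dx,
\]
so that it suffices to compute $\IP(x \in \Dfat)$ for Lebesgue-almost every $x$. First I would dispose of the boundary: the set $B$ of points of $[0,1]^d$ that lie on the boundary of some level-$n$ cube ($n \geq 1$) is a countable union of pieces of hyperplanes, hence $\lambda(B) = 0$, so it contributes nothing to the integral. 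For $x \notin B$ there is, for each $n$, a unique level-$n$ cube $C_n(x)$ containing $x$, and the nested sequence $C_1(x) \supset C_2(x) \supset \cdots$ corresponds to a single index path $(\Ii_1, \Ii_2, \ldots)$.

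Next I would observe that $x \in \Dfat$ if and only if $C_n(x)$ is retained for every $n$, i.e.\ $1_{\text{fat}}((\Ii_1,\ldots,\Ii_n)) = 1$ for all $n$. By the definition of the fat model in Section \ref{sec pre}, $1_{\text{fat}}((\Ii_1, \ldots, \Ii_n))$ equals the product of the \emph{independent} Bernoulli variables $Z_{\text{fat}}(\Ii_1),\, Z_{\text{fat}}((\Ii_1),\Ii_2),\, \ldots,\, Z_{\text{fat}}((\Ii_1,\ldots,\Ii_{n-1}),\Ii_n)$, where the $j$-th factor equals $1$ with probability $p_j$ (since it is of the form $Z_{\text{fat}}(\Ib,\mathbf{j})$ with $|\Ib| = j-1$). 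Hence $\IP(x \in D_{\text{fat}}^n) = \prod_{j=1}^n p_j$, and because the events $\{x \in D_{\text{fat}}^n\}$ are decreasing in $n$,
\[
\IP(x \in \Dfat) = \lim_{n \to \infty} \prod_{j=1}^n p_j = \prod_{n=1}^\infty p_n.
\]
Substituting this back into the Fubini identity, and using that the integrand is now a constant, yields $\IE\lambda(\Dfat) = \prod_{n=1}^\infty p_n$.

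I do not anticipate a genuine obstacle here; the only point requiring a word of care is the joint measurability needed to invoke Fubini. This holds because, off the null set $B$, the map $(x,\omega)\mapsto \mathbf{1}_{\{x \in D_{\text{fat}}^n(\omega)\}}$ is a finite product of indicators of cubes in $x$ with coordinate functions of $\omega$, hence jointly measurable, and $\mathbf{1}_{\{x \in \Dfat\}}$ is the pointwise decreasing limit of these functions. The content of the proposition is thus essentially a bookkeeping exercise once the independence structure of the family $\{Z_{\text{fat}}(\Ib)\}$ is unwound, which is why the paper is content to omit it.
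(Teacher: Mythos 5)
Your proof is correct and is exactly the approach the paper has in mind: the paper explicitly states the result ``follows immediately from Fubini's theorem'' and omits the details, and your argument fills in precisely that outline (Fubini, discarding the boundary null set, computing $\IP(x\in\Dfat)=\prod_n p_n$ via the independence of the $Z_{\text{fat}}$ along the unique index path, and continuity from above).
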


\subsection{Proof of Theorem \ref{thm12}}

Since $\prod_{n=1}^{\infty} p_n >0$ it follows from Proposition \ref{prop expectation} that with positive probability the limit set has positive Lebesgue measure given $\Dfat \neq \emptyset$. Theorem \ref{thm12} states that the latter holds with probability 1.

\begin{proof}[Proof of Theorem \ref{thm12}]
Let $Z_n$ denote the number of retained level-$n$ cubes after iteration step $n$ and set $Z_0:=1$. Since the retention probabilities $p_n$ vary with $n$, the process $(Z_n)_{n \geq 1}$ is a so-called branching process in a time-varying environment. Following the notation of Lyons in \cite{Lyo} let $L_n$ be a random variable, having the distribution of $Z_n$ given that $Z_{n-1}=1$. Note that $L_n$ has a binomial distribution with parameters $N^{d}$ and $p_{n}$. 

Define the process $(W_n)_{n \geq 1}$ by
\[
 W_n := \frac{Z_n}{\prod_{i=1}^{n} p_i N^d}.                                                                                                                                                                                                                                                                                                                                             \]
It is straightforward to show that $(W_n)_{n \geq 1}$ is  a martingale:
\begin{eqnarray*}
 \IE[W_n | W_{n-1}] &=&\frac{\IE[Z_n | Z_{n-1}]}{\prod_{i=1}^{n} p_i N^d} = \frac{Z_{n-1}}{\prod_{i=1}^{n} p_i N^d} \IE[Z_n | Z_{n-1}=1] \\
&=& \frac{Z_{n-1} p_n N^d}{\prod_{i=1}^{n} p_i N^d} = W_{n-1}.
\end{eqnarray*}
The Martingale Convergence Theorem tells us that $W_n$ converges almost surely to a random variable $W$. Theorem 4.14 of \cite{Lyo} states that if
\[
 A:=\sup_{n} ||L_n ||_{\infty} < \infty,
\]
then $W>0$ a.s.\ given non-extinction. It is clearly the case that $A <\infty$, because $L_n$ can take at most the value $N^{d}$. Therefore, $W_n$ converges to a random variable $W$ which is stricly positive a.s.\ given non-extinction.

The Lebesgue measure of the retained cubes at each iteration step $n$ is equal to $Z_n / N^{dn}$. We have
\begin{equation}\label{eqn leb}
 \lambda(\Dfat^n) = \frac{Z_n}{N^{dn}} = \frac{\left(\prod_{i=1}^{n} p_i N^d \right) W_n}{N^{dn}} = \left(\prod_{i=1}^{n} p_i \right) W_n.
\end{equation}
 Letting $n\to \infty$ in \eqref{eqn leb} yields $\lambda(\Dfat)= (\prod_{i=1}^{\infty}p_i) W$. Since $\prod_{i=1}^{\infty} p_i >0$ and $W>0$ a.s.\ given non-extinction, we get the desired result. 
\end{proof}

\subsection{Proof of Theorem \ref{thm lebesgue}}

We start with a heuristic strategy for the proof. For a fixed configuration $\omega \in \Omega$, let us call a point $x$ in the unit cube \emph{conditionally connected} if the following property holds: If we change $\omega$ by retaining all cubes that contain $x$, then $x$ is contained in a connected component larger than one point. We show that for almost all points $x$ it is the case that $x$ is conditionally connected with probability 0 or 1. We define an ergodic transformation $T$ on the unit cube. The transformation $T$ enables us to prove that the probability for a point $x$ to be conditionally connected has the same value for $\lambda$-almost all $x$.  From this we can then conclude that either the set of dust points or the set of connected components contains all Lebesgue measure.

\begin{proof}[Proof of Theorem \ref{thm lebesgue}]
First, we have to introduce some notation. Let $U$ be the collection of points in $[0,1]^d$ not on the boundary of a subcube. For each $x\in U$ there exists a unique sequence $(C({\bf x}_1,\ldots,{\bf x}_n))_{n\geq 1}$ of cubes of the fractal process, where ${\bf x}_j\in J^d$ for all $j$, such that $\bigcap_{n\geq 1} C({\bf x}_1,\ldots,{\bf x}_n) = \{ x \}$. Therefore, we can define an invertible transformation $\phi : U \to (J^d)^{\IN}$ by $\phi(x) = ({\bf x}_1,{\bf x}_2,\ldots)$. For each $n \in \IN$ let $\mu_n$ be the uniform measure on $(X_n,\FF_n)$, where $X_n= J^d$ and $\FF_n$ is the power set of $X_n$. Let $(X,\FF,\mu)=\bigotimes_{n=1}^{\infty}(X_n,\FF_n,\mu_n)$ be the product space. Since $\phi : (U,\BB(U),\lambda) \to (X,\FF,\mu)$ is an invertible measure-preserving transformation, we have that $(X,\FF,\mu)$ is by definition isomorphic to $(U,\BB(U),\lambda)$. Here $\BB(U)$ denotes the Borel $\sigma$-algebra.

Next, we define the transformation $T:U\to U$, which will play a crucial role in the rest of the proof. Define the auxiliary shift transformation $T^* : X\to X$ by $T^*(({\bf x}_1,{\bf x}_2,{\bf x}_3,\ldots)) = ({\bf x}_2,{\bf x}_3,\ldots)$, for $({\bf x}_1,{\bf x}_2,\ldots)\in X$. The transformation $T^*$ is measure preserving with respect to the measure $\mu$ and also ergodic, see for instance \cite{Wal}. Let $T := \phi^{-1} \circ T^* \circ \phi$ be the induced transformation on $U$ and note that $T$ is isomorphic to $T^*$ and hence also ergodic. Informally, $T$ sends a point $x\in U$ to the point $Tx$, in such a way that the relative position of $Tx$ in the unit cube is the same as the relative position of $x$ in its level-1 cube $C({\bf x}_1)$; see Figure \ref{fig transformation}.

\begin{figure}[!ht]
\begin{center}
\includegraphics[width=6cm]{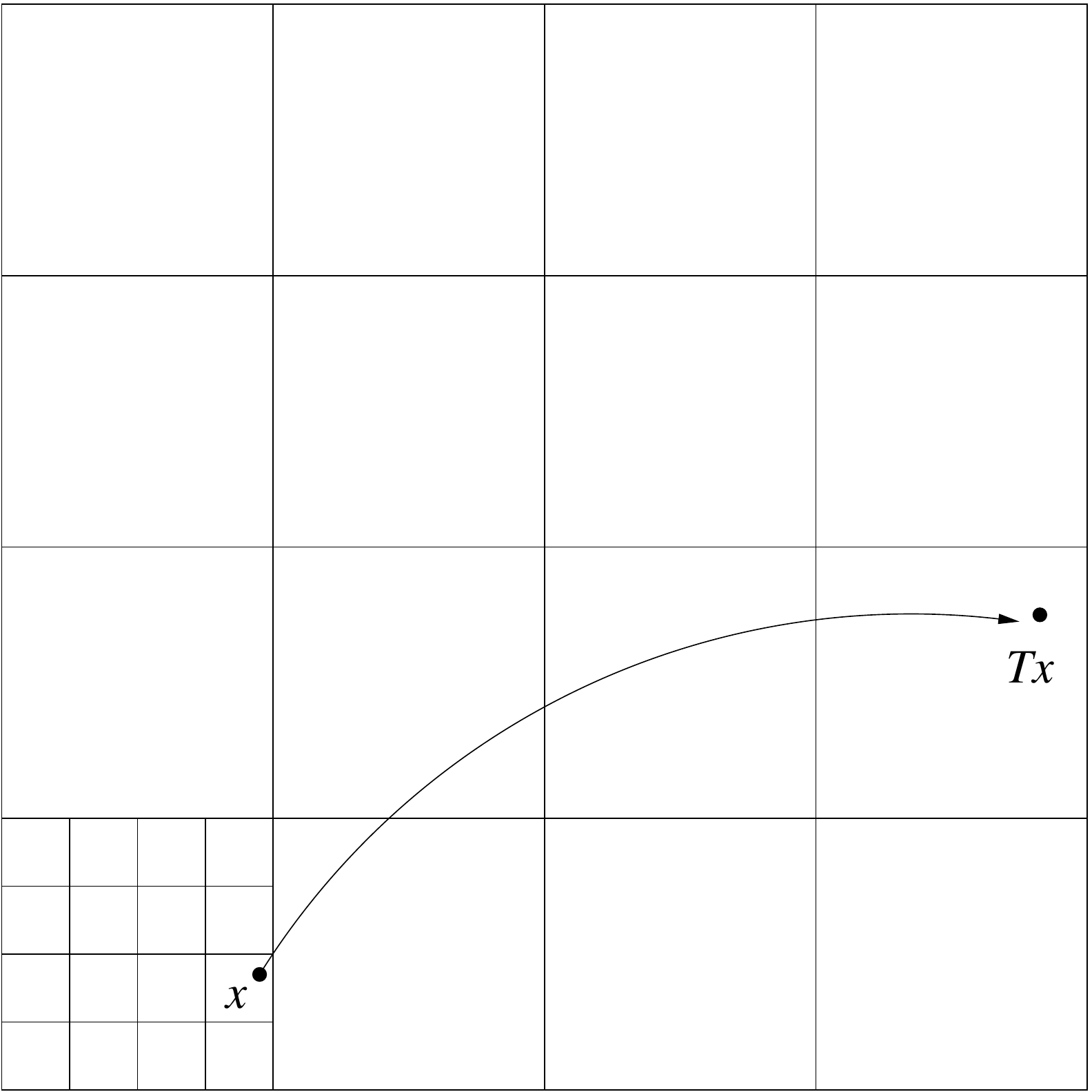}
\caption{Illustration of the transformation $T$. Note that the relative position of $x$ in the level-1 cube is the same as the relative position of $Tx$ in the unit cube.}
\label{fig transformation}
\end{center}
\end{figure}

Recall that $\omega \in \Omega$ denotes a particular realization of the fat fractal percolation process.  For $x \in U$, we define the following event.
\[
 A^x := \left\{\omega: \text{if we set } \omega(C(\textbf{x}_1,\ldots,\textbf{x}_n)) = 1 \text{ for all } n \geq 1, \text{ then } C_{\text{fat}}^{x} \neq \{x\} \right\}.
\]
In other words, $A^{x}$ consists of those configurations $\omega$ such that when we change the configuration by retaining all $C(\textbf{x}_1,\ldots,\textbf{x}_n)$, then in this new configuration, $x$ is in the same connected component as some $y \neq x$. Observe that 
\begin{equation}\label{def A^x}
 A^x \cap \{  x\in \Dfat\} =\{ x \in \Dfatc \}.
\end{equation}
It is easy to see that $A^{x}$ is a tail event. Hence, by Kolmogorov's 0-1 law we get $\IP(A^{x}) \in \{0,1\}$ for all $x\in U$.

However, a priori it is not clear that for almost all $x$ in the unit cube $\IP(A^x)$ has the same value. To this end, define the set $V:=\{x\in U: \IP(A^{x})=0\}$. We will show that $\lambda(V)\in \{0,1\}$. Recall that the relative position of $Tx$ in the unit cube is the same as the relative position of $x$ in the level-1 cube $C(\mathbf{x}_1)$. It is possible to construct a coupling between the fractal process in the unit cube and the fractal process in $C(\mathbf{x}_1)$, given that $C(\mathbf{x}_1)$ is retained,  with the following property: For every cube $C(\Ib)$ in $C(\mathbf{x}_1)$, it is the case that if $TC(\Ib)$ is retained in the fractal process in the unit cube, then $C(\Ib)$ is also retained in the fractal process in $C(\mathbf{x}_1)$, given that $C(\mathbf{x}_1)$ is retained. It is straightforward that such a coupling exists since the retention probabilities $p_n$ are non-decreasing in $n$. Hence,
\begin{equation}\label{A^Tx ineq}
 \IP(A^{Tx}) \leq \IP(A^x | C(\mathbf{x}_1) \text{ is retained}).
\end{equation}
Furthermore, since $A^x$ is a tail event, we have 
\begin{equation}\label{cond eq}
\IP(A^x)=\IP(A^x | C(\mathbf{x}_1) \text{ is retained}). 
\end{equation}
It follows from \eqref{A^Tx ineq} and \eqref{cond eq} that $\IP(A^{Tx}) \leq \IP(A^x)$ for all $x$. This implies that $V \subset T^{-1} V$. Because $T$ is measure preserving it follows that 
\[
 \lambda(V \Delta T^{-1} V) = \lambda(V \setminus T^{-1}V) + \lambda(T^{-1}V \setminus V) = 0+\lambda(T^{-1}V) - \lambda(V) = 0.
\]
Ergodicity of $T$ now yields that $\lambda(V) \in \{0,1\}$.

Suppose $\lambda(V)=0$. Then $\IP(x \in \Dfatd)=\IP(\{x\in \Dfat\} \setminus A^x)=0$ for almost all $x\in [0,1]^d$, by \eqref{def A^x}. Applying Fubini's theorem gives
\begin{eqnarray*}
 \IE \lambda (\Dfatd) &=& \int_{\Omega}\int_{[0,1]^{d}} 1_{\Dfatd}(x,\omega)d\lambda d\IP \\
&=&  \int_{[0,1]^{d}}\int_{\Omega} 1_{\Dfatd}(x,\omega) d\IP d\lambda\\
&=& \int_{[0,1]^{d}} \IP(x \in \Dfatd) d\lambda = 0.
\end{eqnarray*}
 Therefore $\lambda(\Dfatd)=0$ a.s. By Theorem \ref{thm12} we have $\lambda(\Dfatc)> 0$ a.s.\ given non-extinction.

Next suppose that $\lambda(V)=1$. Then with a similar argument we can show that $\lambda(\Dfatc)=0$ and $\lambda(\Dfatd)>0$ a.s.\ given non-extinction. 
\end{proof}

\subsection{Proof of Theorem \ref{thm charac}}

\begin{proof}[Proof of Theorem \ref{thm charac}]
(i) Suppose that $\Dfat$ has a non-empty interior with positive probability. Then we have
\begin{eqnarray*}
0 &<& \IP(\Dfat \text{ has non-empty interior}) \\
&=& \IP(\exists n, \exists \Ii_1,\ldots,\Ii_n: C(\Ii_1,\ldots,\Ii_n) \subset \Dfat) \\
&\leq& \sum_{ n, \Ii_1,\ldots,\Ii_n} \IP(C(\Ii_1,\ldots,\Ii_n) \subset \Dfat).
\end{eqnarray*}
Since we sum over countably many cubes, there must exist $n$ and $\Ii_1,\ldots,\Ii_n$ such that $\IP(C(\Ii_1,\ldots,\Ii_n) \subset \Dfat) > 0$. Hence, by translation invariance, $\IP(C(\Ii_1,\ldots,\Ii_n) \subset \Dfat) > 0$ for this specific $n$ and all $\Ii_1,\ldots,\Ii_n$. We can apply the FKG inequality to obtain $\IP(\Dfat = [0,1]^d) = \IP(C(\Ii_1,\ldots,\Ii_n) \subset \Dfat \,\forall \Ii_1,\ldots,\Ii_n) >0$. Since $\IP(\Dfat = [0,1]^d) = \prod_{n=1}^{\infty} p_n^{N^{dn}}$, this proves the first part of the theorem.

(ii) Suppose $\prod_{n=1}^{\infty} p_n^{N^n} > 0$. Then for each $x\in [0,1]^{d-1}$ we have $\IP(\{x\} \times [0,1] \subset \Dfat) \geq \prod_{n=1}^{\infty} p_n^{N^n} > 0$. Let $\lambda_{d-1}$ denote $(d-1)$-dimensional Lebesgue measure. 
Applying Fubini's theorem gives
\begin{eqnarray*}
&& \IE \lambda_{d-1}(\{x\in [0,1]^{d-1} : \{x\}\times [0,1] \subset \Dfat\}) \nonumber \\
&=&  \int_{\Omega} \int_{[0,1]^{d-1}} 1_{\{x\}\times [0,1] \subset \Dfat}\,d\lambda_{d-1} d\IP \nonumber \\
&=& \int_{[0,1]^{d-1}} \int_{\Omega} 1_{\{x\}\times [0,1] \subset \Dfat}\,d\IP d\lambda_{d-1} \nonumber \\
&=& \int_{[0,1]^{d-1}} \IP(\{x\}\times [0,1] \subset \Dfat) d\lambda_{d-1} >0. 
\end{eqnarray*}
Hence, 
\begin{equation}\label{strictly positive}
\lambda_{d-1}(\{x\in [0,1]^{d-1} : \{x\}\times [0,1] \subset \Dfat \}) > 0
\end{equation}
with positive probability. Observe that 
\[
\Dfatc \supset \bigcup_{x \in [0,1]^{d-1}: \{x\}\times[0,1]\subset \Dfat} \{x\} \times [0,1].                                                                                                                                               
\]
In particular,
\[
\lambda(\Dfatc) \geq \lambda_{d-1}(\{x\in [0,1]^{d-1} : \{x\}\times [0,1] \subset \Dfat\}).
\]
From \eqref{strictly positive} we conclude that $\lambda(\Dfatc)>0$ with positive probability. It now follows from Theorem \ref{thm lebesgue} that the Lebesgue measure of the dust set is 0 a.s.

(iii) Next assume that $\prod_{n=1}^{\infty} p_n^{N^{dn}} > 0$. For each level $n$, we have $\IP(\Dfat^n=\Dfat^{n-1}) \geq p_n^{N^{dn}}$. Since $\prod_{n=1}^{\infty} p_n^{N^{dn}} > 0$ is equivalent to $\sum_{n=1}^{\infty}( 1 - p_n^{N^{dn}}) < \infty$, we have
\[
\sum_{n=1}^{\infty} \IP(\Dfat^n\neq \Dfat^{n-1}) \leq \sum_{n=1}^{\infty} (1 - p_n^{N^{dn}}) < \infty.
\]
Applying the Borel-Cantelli lemma gives that, with probability 1, $\{\Dfat^n \neq \Dfat^{n-1}\}$ occurs 
for only finitely many $n$. Hence, with probability 1 there exists an $n$ such that $\Dfat$ can be written as the union of level-$n$ cubes.
\end{proof}

\subsection{Proof of Theorem \ref{thm pos prob}}

\begin{proof}[Proof of Theorem \ref{thm pos prob}]
$(iii) \Rightarrow (ii)$. Trivial.

$(ii) \Rightarrow (i)$.  Suppose $\IP(x \mbox{ connected to } y) > 0$ for all $x,y \in U$, for some set $U\subset [0,1]^2$ with $\lambda(U) > 0$. Fix $y\in U$. By Fubini's theorem
\begin{eqnarray*}
\IE \lambda(D^c_{\text{fat}}) &=& \int_{\Omega} \int_{[0,1]^2} 1_{D^c_{\text{fat}}}(x,\omega) d\lambda(x) d\IP(\omega) \\
&=& \int_{[0,1]^2} \int_{\Omega} 1_{D^c_{\text{fat}}}(x,\omega) d\IP(\omega) d\lambda(x) \\
&=& \int_{[0,1]^2} \IP(x\in D^c_{\text{fat}}) d\lambda(x) \\
&\geq& \int_{U\setminus \{y\}} \IP(x \mbox{ connected to } y) d\lambda(x) > 0.
\end{eqnarray*}
Hence $\lambda(D^c_{\text{fat}}) > 0$ with positive probability. By Theorem \ref{thm lebesgue} it follows that $\lambda(D^c_{\text{fat}}) > 0$ a.s.\ given non-extinction of the fat fractal process.

$(i) \Rightarrow (iii)$. Next suppose that $\lambda(D^c_{\text{fat}}) > 0$ a.s.\ given non-extinction of the fat fractal process. For points $x\in[0,1]^2$ not on the boundary of a subcube, define the event $A^x$ as in the proof of Theorem \ref{thm lebesgue}. It follows from the proof of Theorem \ref{thm lebesgue} that $\IP(A^x)=1$ for all $x\in V$, for some set $V\subset[0,1]^2$ with $\lambda(V)=1$. By \eqref{def A^x} we have for all $x\in V$
\[
\IP(x\in D^c_{\text{fat}})= \IP(x\in D_{\text{fat}})>0.
\]

Let $x\in V$. Then
\[
0 < \IP(x\in D^c_{\text{fat}}) \leq \sum_{n=1}^{\infty} \IP(\mbox{diam}(C^x_{\text{fat}}) > \textstyle{\frac{1}{n}}),
\]
where $\mbox{diam}(C^x_{\text{fat}})$ denotes the diameter of the set $C^x_{\text{fat}}$. So there exists a natural number $n_x$ such that $\IP(\mbox{diam}(C^x_{\text{fat}}) > \frac{1}{n_{x}}) > 0.$ Hence 
\[
\textstyle \IP(x \mbox{ connected to } S(x,\frac{1}{2n_{x}})) > 0,
\]
where $S(x,\frac{1}{2n_{x}})$ is a circle centered at $x$ with radius $\frac{1}{2n_{x}}$. Write $x=(x_1,x_2)$ and define the following subsets of $\IR^2$
\begin{eqnarray*}
H_1 &=& \textstyle [0,1] \times [x_2-\frac{1}{4n_x},x_2], \\
H_2 &=& \textstyle [0,1] \times [x_2,x_2+\frac{1}{4n_x}], \\
V_1 &=& \textstyle [x_1-\frac{1}{4n_x},x_1] \times [0,1], \\
V_2 &=& \textstyle [x_1,x_1+\frac{1}{4n_x}] \times [0,1].
\end{eqnarray*}
Note that for every $x \in [0,1]^2$ it is the case that at least one horizontal strip $H_i$ and at least one vertical strip $V_j$ is entirely contained in $[0,1]^2$. Define the event $\Gamma_x$ by
\begin{eqnarray*}
\Gamma_x &=& \bigcap_{i\in\{1,2\}:H_i \subset [0,1]^2} \{ \mbox{horizontal crossing in } H_i \} \\
&\cap& \bigcap_{j\in\{1,2\}:V_j \subset [0,1]^2}\{ \mbox{vertical crossing in } V_j \}.
\end{eqnarray*}
See Figure \ref{fig Gamma_x} for an illustration of the event $\Gamma_x$. From Theorem 2 in \cite{ChaChaDur} it follows that in the MFP model with parameter $p \geq p_c(N,2)$, the limit set $D_p$ connects the left-hand side of $[0,1]^2$ with its right-hand side with positive probability. It then follows from the RSW lemma (e.g.\ Lemma 5.1 in \cite{DekMee}) and the FKG inequality that $\IP_{p}(\Gamma_x)>0$. Let $A_{n}$ denote the event of complete retention until level $n$, i.e.\ $\omega(C(\Ib))=1$ for all $\Ib \in \JJ^{(n-1)}$. Since  $\prod_{n=1}^{\infty} p_n >0$ there exists an integer $n_0$  such that $p_{n} \geq p_c(N,2)$ for all $n \geq n_0$. Hence, the probability measure $\IP_{\text{fat}}(\cdot | A_{n_0}$) dominates $\IP_{p_{c}(N,2)}(\cdot)$. Since $\IP_{\text{fat}}(A_{n_0})>0$ it follows that $\IP_{\text{fat}}(\Gamma_x) > 0$.

\begin{figure}[!ht]
\begin{center}
\includegraphics[width=7cm]{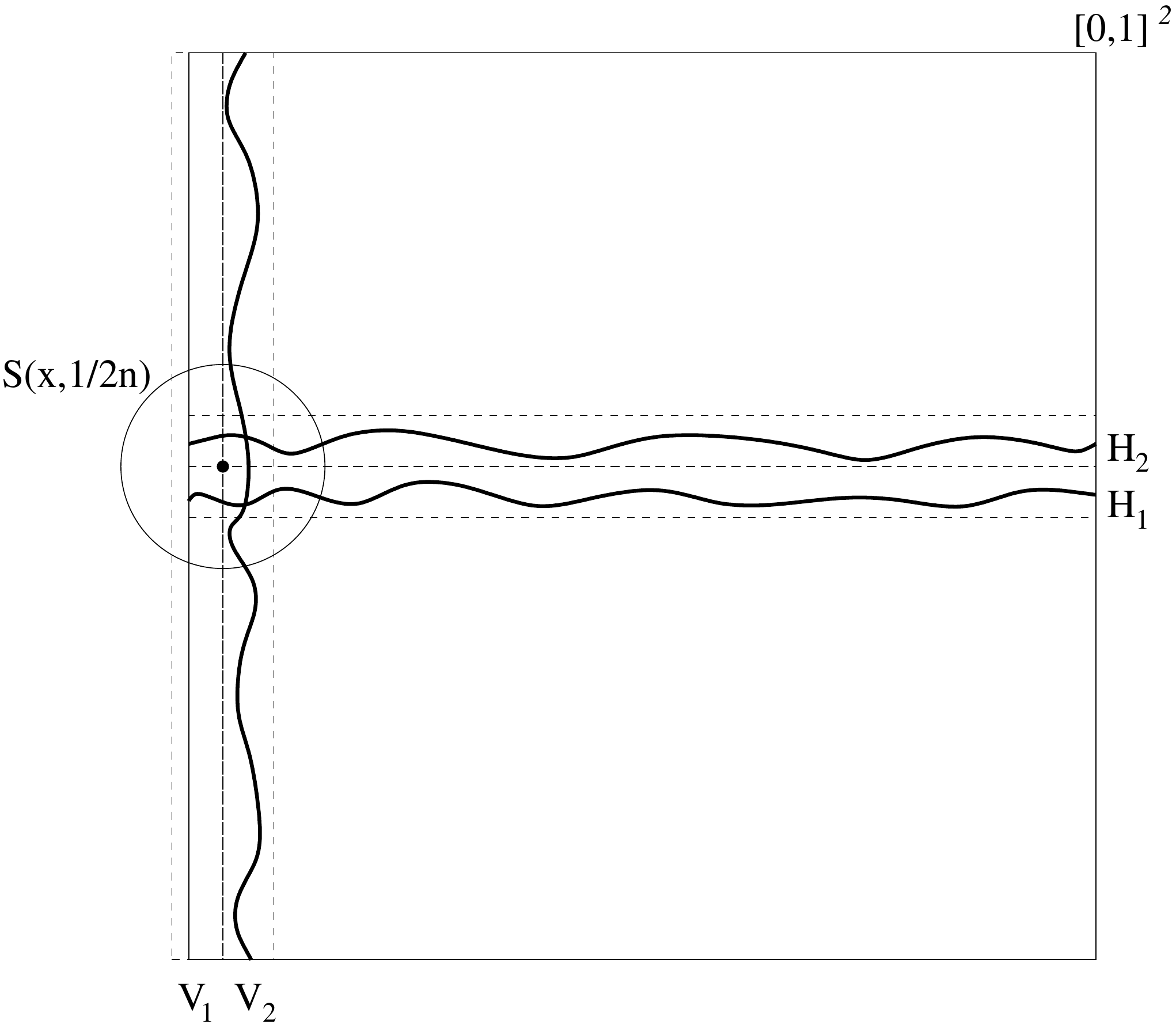}
\caption{Realization of the event $\Gamma_x$.}
\label{fig Gamma_x}
\end{center}
\end{figure}

Observe that for $x,y \in V$
\begin{eqnarray*}
\lefteqn{ \{ x \mbox{ connected to } y \} } \\
&\supset& \{ x \mbox{ connected to } S(x,\textstyle{\frac{1}{2n_x}}) \} \cap \Gamma_x \cap \{ y \mbox{ connected to } S(y,\textstyle{\frac{1}{2n_y}}) \} \cap \Gamma_y.
\end{eqnarray*}
 Since all four events on the right-hand side are increasing and have positive probability, we can apply the FKG inequality to conclude that for all $x,y\in V$ we have $\IP(x \mbox{ connected to } y) > 0$.
\end{proof}

\bibliographystyle{amsplain}
\bibliography{k-fractal}

\end{document}